\definecolor{purple}{rgb}{.9,0,.9}
\def\smath#1{\text{\scalebox{.8}{$#1$}}}
\def\sfrac#1#2{\smath{\frac{#1}{#2}}}
\let\orgdescriptionlabel\descriptionlabel
\renewcommand*{\descriptionlabel}[1]{%
  \let\orglabel\label
  \let\label\@gobble
  \phantomsection
  \edef\@currentlabel{#1}%
  \let\label\orglabel
  \orgdescriptionlabel{#1}%
}
\newcommand{\Nat}{\mathbb{N}}
\newcommand{\Real}{\mathbb{R}}
\newcommand{\cC}{{\cal C}}
\newcommand{\cD}{{\cal D}}\newcommand{\cE}{{\cal E}}
\newcommand{\cH}{{\cal H}}
\newcommand{\cM}{{\cal M}}\newcommand{\cN}{{\cal N}}\newcommand{\cO}{{\cal O}}
\newcommand{\cS}{{\cal S}}\newcommand{\cU}{{\cal U}}
\newcommand{\cV}{{\cal V}}\newcommand{\cW}{{\cal W}}\newcommand{\cX}{{\cal X}}
\newcommand{\cY}{{\cal Y}}\newcommand{\cZ}{{\cal Z}}
\newcommand{\ba}{{\bf a}}\newcommand{\bb}{{\bf b}}
\newcommand{\be}{{\bf e}}
\newcommand{\bt}{{\bf t}}\newcommand{\bu}{{\bf u}}
\newcommand{\bv}{{\bf v}}\newcommand{\bx}{{\bf x}}
\newcommand{\by}{{\bf y}}\newcommand{\bA}{{\bf A}}
\newcommand{\bB}{{\bf B}}\newcommand{\bC}{{\bf C}}
\newcommand{\bE}{{\bf E}}\newcommand{\bF}{{\bf F}}
\newcommand{\bI}{{\bf I}}
\newcommand{\bL}{{\bf L}}\newcommand{\bM}{{\bf M}}
\newcommand{\bN}{{\bf N}}\newcommand{\bP}{{\bf P}}
\newcommand{\bU}{{\bf U}}\newcommand{\bV}{{\bf V}}
\newcommand{\bX}{{\bf X}}\newcommand{\bY}{{\bf Y}}
\newcommand{\Lin}{\mathop{\rm Lin}}
\newcommand{\SO}{\mathop{\rm SO}}
\newcommand{\blambda}{\boldsymbol{\lambda}}
\newcommand{\bnu}{\boldsymbol{\nu}}
\newcommand{\bmu}{\boldsymbol{\mu}}
\newcommand{\bomega}{\boldsymbol{\omega}}
\newcommand{\bchi}{\boldsymbol{\chi}}
\newcommand{\bPhi}{\boldsymbol{\Phi}}
\newcommand{\bphi}{\boldsymbol{\phi}}
\newcommand{\bpsi}{\boldsymbol{\psi}}
\newtheorem{theorem}{Theorem}[section]
\newtheorem{lemma}[theorem]{Lemma}
\newtheorem{remark}[theorem]{Remark}
\newcommand{\beqn}{\begin{equation}}
\newcommand{\eeqn}{\end{equation}}
\newcommand{\bone}{\textbf{1}}
\newcommand{\bzero}{{\bf 0}}
\def\H{\mathcal H}
\def\M{\mathcal M}
\def\A{\mathcal A}
\def\W{\mathcal W}
\def\R{\mathbb R}
\def\e{\varepsilon}
\def\s{\sigma}
\def\Lin{{\rm Lin}}
\def\rng{{\rm Rng}\,}
\def\bom{\boldsymbol{\omega}}
\def\aa{\textbf{a}}
\def\uu{\textbf{u}}
\def\ee{\textbf{e}}
\def\tt{\textbf{t}}
\def\Om{\Omega}
\def\d{\text{ d}}
\def\pa{\partial}
\def\E{\mathcal{E}}
\newcommand{\vol}{\textbf{\rm \textbf{vol}}}
\newcommand{\D}{\mathcal{D}}
\newcommand{\diam}{\mathrm{diam}}
\def\tt{\mathbf{t}}
\def\aa{\mathbf{a}}
\title{A definition of fractional $k$-dimensional measure: bridging the gap between fractional length and fractional area}
\author{Cornelia Mihaila and Brian Seguin}
\begin{document}
%\date{}

\maketitle

\tableofcontents

\begin{abstract}
\noindent Here we introduce a fractional notion of $k$-dimensional measure, $0\leq k<n$, that depends on a parameter $\sigma$ that lies between $0$ and $1$.  When $k=n-1$ this coincides with the fractional notions of area and perimeter, and when $k=1$ this coincides with the fractional notion of length.  It is shown that, when multiplied by the factor $1-\sigma$, this $\sigma$-measure converges to the $k$-dimensional Hausdorff measure up to a multiplicative constant that is computed exactly.  We also mention several future directions of research that could be pursued using the fractional measure introduced.\\

\noindent \emph{Dedicated to the memory of David Seguin, whose brother will always miss his laugh that filled a room.}
\end{abstract}

\section{Introduction} 

Given a parameter $\sigma$ satisfying $0<\sigma<1$, consider the functional
\beqn
J_\sigma(u)\coloneqq \int_{\R^n}\int_{\R^n} \frac{|u(x)-u(y)|^2}{|x-y|^{n+\sigma}} dxdy
\eeqn
defined on functions of the form $u:\Real^n\rightarrow\Real$, which is the square of the fractional Sobolev seminorm $|u|_{H^{\sigma/2}}$.  In the case where $u$ is the characteristic function $\chi_E$ of an open set $E$,
\beqn
J_\sigma(\chi_E)=2\int_{\R^n}\int_{\R^n}\frac{\chi_E(x)\chi_{E^c}(y)}{|x-y|^{n+\sigma}} dxdy,
\eeqn
where $E^c$ is the complement of $E$.  This motivates the consideration of the fractional perimeter defined by
\beqn
\text{Per}_\sigma(E)\coloneqq\frac{1}{\alpha_{n-1}}\int_E\int_{E^c} |x-y|^{-n-\sigma} dxdy,
\eeqn
where $\alpha_{n-1}$ is the volume of a unit ball in $\R^{n-1}$.  This functional, without the prefactor, was first studied by Visintin \cite{V91} who was motivated by the desire to study generalized surface tension.  When $E$ is unbounded, this functional is generally infinite.  To deal with this case, one can introduce a bounded, open set $\Omega$ and speak about the fractional perimeter of $E$ relative to $\Omega$, as was done by Caffarelli, Roquejoffre, and Savin \cite{CRS10}.  This is given by
\begin{align}
\text{Per}_\sigma(E,\Omega)\coloneqq&\frac{1}{\alpha_{n-1}}\Big( \int_{E\cap\Omega}\int_{E^c}+\int_{E\cap\Omega^c}\int_{E^c\cap\Omega}\Big) |x-y|^{-n-\sigma} dxdy\\
\label{sper}=&\frac{1}{\alpha_{n-1}}\int_{E}\int_{E^c} \frac{\max\{\chi_\Omega(x),\chi_\Omega(y)\}}{|x-y|^{n+\sigma}} dxdy.
\end{align}
A minimizer of the fractional perimeter, called a $\sigma$-minimal surface, is defined as a set $E$ such that for any other set $F\subseteq\R^n$ satisfying $E\setminus\Omega=F\setminus\Omega$ we have
\beqn
\text{Per}_\sigma(E,\Omega)\leq \text{Per}_\sigma(F,\Omega).
\eeqn
Caffarelli and collaborators \cite{C09,CS10,CV11} motivated the fractional perimeter by considering problems in nonlocal diffusion and phase transitions.  There are other applications as well.  For example, Lombardini \cite{L19} found a link between when the fractional perimeter is finite for a given parameter $\sigma$ and the fractal dimension of $\partial E$.

There are numerous works studying the minimizers of $\text{Per}_\sigma$.  See, for example, \cite{CL13,DV18,FV17,SV13}.  Among other things, it is known that $\sigma$-minimal surfaces are smooth off of a set of dimension at most $n-8$ for $\sigma$ close to $1$.  This is in agreement with a well-known result for classical minimal surfaces \cite{G84}.  However, $\sigma$-minimal surfaces do exhibit features different from classical minimal surfaces in that they may stick to the boundary.  See the work of Dipierro, Savin, and Valdinoci \cite{DSV17} and Sipierro and Valdinoci \cite{DV18}.  If $E$ is a $\sigma$-minimal surface, then Caffarelli, Roquejoffre, and Savin \cite{CRS10} showed that a pointwise condition must hold on $\partial E$.  This condition was used by Abatangelo and Valdinoci \cite{AV14} to define a nonlocal analog of mean curvature and directional curvature.  Motion by this nonlocal mean-curvature has been studied using level set methods.  See the work of Imbert \cite{I09} and Chambelle, Marini, and Ponsiglione \cite{CMP12,CMP13,CMP15}.

The motivation for the name of Per$_\sigma$, as well as the factor $\alpha_{n-1}^{-1}$ that appears in its definition, comes from the fact that in an appropriate limit, the fractional perimeter converges to the classical notion of perimeter.  Caffarelli and Valdinoci \cite{CV11} showed that if $E$ has smooth boundary, then
\beqn\label{limsPer}
\lim_{\sigma\uparrow 1}(1-\sigma)\text{Per}_\sigma(E,B_r)=\cH^{n-1}(\partial E\cap B_r)
\eeqn
for almost every $r>0$, where $B_r$ is the ball centered at the origin of radius $r$.  Ambrosio, Philippis, and Martinazi \cite{APM11} were able to show that the fractional perimeter $\Gamma$-converges to the classical notion of perimeter as $\sigma$ goes to $1$.  The asymptotics of the fractional perimeter as $\sigma$ goes to zero have also been studied.  Dipierro, Figalli, Palatucci, and Valdinoci \cite{DF13} proved that
\beqn
\lim_{\sigma\downarrow 0} \sigma\text{Per}_\sigma(E,\Omega)=\frac{1}{\alpha_{n-1}} [ (1-a(E))\cH^n(E\cap\Omega)+a(E)\cH^n(\Omega\setminus E)],
\eeqn
where $\displaystyle a(E)\coloneqq\lim_{\sigma\downarrow 0}\frac{\sigma}{\omega_{n-1}} \int_{E\setminus B_1} |y|^{-n-\sigma}dy$ and $\omega_{n-1}$ is the surface area of a unit ball in $\R^n$.

The fractional perimeter is a measure of area for a surface that is the boundary of a set.  Of course, many surfaces are not the boundary of a set.  Paroni, Podio-Guidugli, and Seguin \cite{PPGS99} noticed that when $\partial E$ is smooth, the fractional perimeter \eqref{sper} can be written as
\beqn
\text{Per}_\sigma(E,\Omega)=\frac{1}{2\alpha_{n-1}}\int_{\cX(\partial E)} \frac{\max\{\chi_\Omega(x),\chi_\Omega(y)\}}{|x-y|^{n+\sigma}} dxdy,
\eeqn
where $\cX(\partial E)$ denotes the set of pairs $(x,y)\in\R^n\times\R^n$ such that the line segment connecting $x$ and $y$ intersects $\partial E$ an odd number of times.  Since the right-hand side of the previous equation only depends on the set $E$ through the surface $\partial E$, this allows for the definition of a fractional notion of area.  Namely, if $\cS$ is a smooth surface, the fractional area of $\cS$ relative to $\Omega$ is defined by
\beqn\label{sarea}
\text{Area}_\sigma(\cS,\Omega)\coloneqq\frac{1}{2\alpha_{n-1}}\int_{\cX(\cS)} \frac{\max\{\chi_\Omega(x),\chi_\Omega(y)\}}{|x-y|^{n+\sigma}} dxdy.
\eeqn
Under the assumption that $\cS\subseteq\Omega$, the fractional area converges to the usual notion of area in the sense that
\beqn\label{limsarea}
\lim_{\sigma\uparrow 1}(1-\sigma)\text{Area}_\sigma(\cS,\Omega)=\cH^{n-1}(\cS).
\eeqn

In the work of Seguin \cite{S20,S20c} the above idea was used to define a fractional notion of length for curves in $\R^n$.  To see how this works, consider the case when $n=2$ so that $\cS$ is a curve in $\R^2$.  In that case,
\beqn\label{2dsarea}
\text{Area}_\sigma(\cS,\Omega)=\frac{1}{4} \int_{\cX(\cS)}  \frac{\max\{\chi_\Omega(x),\chi_\Omega(y)\}}{|x-y|^{n+\sigma}} dxdy.
\eeqn
This expression involves an integral over the set of line segments, described by their end points, that intersect $\cS$ an odd number of times.  Rather that using endpoints $x$ and $y$ to describe a given line segment, it can be viewed as a one-dimensional disk with a center $p$, a unit vector $\bu$ normal to the disk, and a radius $r$.  When this is done, these objects are related through
\beqn
(x,y)=(p-r\bu',p+r\bu'),
\eeqn
where $\bu'$ is obtained by rotating $\bu$ clockwise by 90$^\circ$.  Using this change of variables, \eqref{2dsarea} becomes
\beqn
\text{Area}_\sigma(\cS,\Omega)=\frac{1}{2} \int_{\cD(\cS)}  \frac{\max\{\chi_\Omega(p+r\bu'),\chi_\Omega(p-r\bu')\}}{(2r)^{1+\sigma}} d\cH^4(p,\bu,r),
\eeqn
where $\cD(\cS)$ consists of all one-dimensional disks that intersect $\cS$ an odd number of times.  As $\cS$ is one dimensional here, this suggests a fractional notion of length.  Namely, if $\cC$ is a smooth curve in $\R^n$, then its fractional length is given by
\beqn\label{slen}
\text{Len}_\sigma(\cC,\Omega)\coloneqq\int_{\cD(\cC)}  \frac{\sup\{\chi_\Omega(p+r\bv):\bv\in \cU(\{\bu\}^\perp)\}}{r^{1+\sigma}} d\cH^{2n}(p,\bu,r).
\eeqn
Here $\{\bu\}^\perp$ is the collection of vectors orthogonal to $\bu$, $\cU(\{\bu\}^\perp)$ is the set of unit vectors in $\{\bu\}^\perp$, and $\cD(\cC)$ is the collection of triples $(p,\bu,r)$ such that the corresponding disk
\beqn
D(p,\bu,r)\coloneqq\{p+\xi\bv:\bv\in\cU(\{\bu\}^\perp), \xi\in[0,r)\}
\eeqn
intersects the curve $\cC$ an odd number of times.  The presence of the supremum in \eqref{slen} is analogous to the maximum in \eqref{2dsarea}.  The interpretation of this term is that only those disks whose boundary intersects $\Omega$ contribute to the integral.  Seguin \cite{S20,S20c} established that under the assumption $\cC\subseteq\Omega$,
\beqn\label{limslen}
\lim_{\sigma\uparrow 1}(1-\sigma)\text{Len}_\sigma(\cC,\Omega)=\frac{4\pi^{n-1}}{\Gamma(\sfrac{n+1}{2})\Gamma(\sfrac{n-1}{2})(n-1)}\cH^1(\cC).
\eeqn

The definitions \eqref{sarea} and \eqref{slen} provide fractional notions of measure for $n-1$ and $1$ dimensional manifolds in $\R^n$.  The goal of this work is to bridge the gap between these results and define a fractional notion of measure for $k$-dimensional manifolds and establish a result analogous to \eqref{limsarea} and \eqref{limslen}.  To motivate the definition we introduce, begin by noticing that the fractional area involves considering a set of line segments, or one-dimensional disks, that intersect the surface on odd number of times, while the fractional length involved a set of $(n-1)$-dimensional disks. The analogue is to consider $(n-k)$-dimensional disks that intersect a given $k$-dimensional manifold an odd number of times. Notice that in every case, the dimension of the geometric object being measured plus the dimension of the disks involved adds up to $n$.  Moreover, the intersection of a $k$-dimensional manifold with a $(n-k)$-dimensional disk generally results in a collection of points, rather than higher dimensional sets.  Thus, it makes sense to ask whether the intersection consists of an even or odd number of points.  Similar to the definitions \eqref{sarea} and \eqref{slen}, the definition of the fractional $k$-dimensional measure will involve integrating over all disks that intersect the manifold an odd number of times.  

To define the disks being used, and perform various calculations, we make use of $k$-vectors.  For a detailed introduction to $k$-vectors, see Ros\'en \cite{R19}.  In an effort to make this work self contained, we have included an introduction to $k$-vectors in the Section~\ref{Sect2}.  Here we also set notion and mention some facts about the gamma and beta functions that will be used.  In Section~\ref{Sect3} we discuss the set of disks used to define the fractional $k$-dimensional measure and prove preliminary results about the dimensions of relevant subsets. In Section~\ref{Sect4} we introduce a fractional $k$-dimensional measure and prove it converges in an appropriate limit to the $\cH^k$ measure.  Section~\ref{Sect5} contains several directions of research that can be investigated involving the fractional measure.  Lastly, the Appendix contains the change of variable formula and other computations necessary for proving the results of Sections~\ref{Sect3} and \ref{Sect4}.

\section{Notation and mathematical preliminaries}\label{Sect2}

Let $\Real^+\coloneqq(0,\infty)$ denote the set of positive numbers not including zero and $\Real^+_0\coloneqq[0,\infty)$ the set of positive numbers including zero.  Given an inner-product space $\cV$, let $\cU(\cV)$ denote the set of all unit vectors in $\cV$.  If $\cZ$ is a subset of $\cV$, let $\cZ^\perp$ denote those vectors in $\cV$ that are orthogonal to every vector in $\cZ$.   

Recall that the gamma function $\Gamma$ is defined for all positive numbers $x$ by
\beqn\label{defgamma}
\Gamma(x)\coloneqq\int_0^\infty y^{x-1}e^{-y}dy
\eeqn
and can be viewed as a generalization of the factorial in that it satisfies
\beqn
\Gamma(x+1)=x\Gamma(x),\qquad x\in\Real^+.
\eeqn
It is known that
\beqn\label{gammahalf}
\Gamma(\sfrac{1}{2})=\sqrt{\pi}.
\eeqn
The beta function $B$ is closely related to the gamma function.  While there are many equivalent expressions, the one must useful in this work is
\begin{align}
\label{betatrig}B(x,y)&=2\int_0^{\pi/2} \sin^{2x-1}\theta\cos^{2y-1}\theta d\theta.
\end{align}
The beta and gamma functions are related through the identity
\beqn\label{betagamma}
B(x,y)=\frac{\Gamma(x)\Gamma(y)}{\Gamma(x+y)}.
\eeqn

The volume $\alpha_n$ and surface area $\omega_{n-1}$ of a unit ball in $\R^n$ can be given in terms of the gamma function:
\begin{align}
\label{volball}\alpha_n&=\frac{\pi^{n/2}}{\Gamma(1+n/2)},\\
\label{areaball}\omega_{n-1}&=\frac{2\pi^{n/2}}{\Gamma(n/2)}.
\end{align}
Let SO$(n)$ denote the set of all rotations of $\R^n$.  Recall that this is a $\sfrac{n(n-1)}{2}$-dimensional manifold.  Viewing $\SO(n)$ as a subset of $\R^{n^2}$ with the natural metric of this space, we have
\beqn\label{SOnmeas}
\cH^{\sfrac{n(n-1)}{2}}(\SO(n))=2^{\sfrac{n(n-1)}{4}}\prod_{j=1}^{n-1}\omega_j=\frac{2^{\sfrac{(n+4)(n-1)}{4}}\pi^{\sfrac{(n+2)(n-1)}{4}}}{\displaystyle\prod_{j=2}^n\Gamma(\sfrac{j}{2})}.
\eeqn
See, for example, Zhou and Shi \cite{SZ14}.

There are different ways to define multivectors.  Here, we take the approach of viewing them as skew multilinear mappings.  While this is not the definition used in the book by Ros\'en \cite{R19}, it is equivalent.  See Federer \cite{Fed}.  Fix a natural number $k$ such that $1\leq k\leq n$.  Given $k$ vectors $\bv_i$, $i\in\{1,\dots,k\}$, define the $k$-vector $\bv_1\wedge\dots\wedge\bv_k$ to be the multilinear map from the $k$-fold product $(\R^n)^k$ to $\R$ by
\beqn\label{simpkvector}
(\bv_1\wedge\dots\wedge\bv_k)(\ba_1,\dots,\ba_k)\coloneqq\text{det}\ 
\begin{blockarray}{cccc}
\begin{block}{(cccc)}
\bv_1\cdot\ba_1&\bv_1\cdot\ba_2&...&\bv_1\cdot\ba_k\\
\bv_2\cdot\ba_1&\bv_2\cdot\ba_2&...&\bv_2\cdot\ba_k\\
\vdots&\vdots&\ddots&\vdots\\
\bv_{k}\cdot\ba_1&\bv_{k}\cdot\ba_2&...&\bv_{k}\cdot\ba_k\\
\end{block}
\end{blockarray}\, ,
\qquad (\ba_1,\dots,\ba_k)\in(\R^n)^k.
\eeqn
Notice that this mapping is skew in the sense that switching any pair of $\ba_i$ results in changing the sign of the output.  The set of all $k$-vectors, denoted by $\Lambda^k(\R^n)$, is the linear span of all linear mappings of the form $\bv_1\wedge\dots\wedge\bv_k$.  We have $\text{dim}\,\Lambda^k(\R^n)={n\choose k}=\frac{n!}{k!(n-k)!}$. 

The collection of $k$-vectors of the form \eqref{simpkvector} are called simple $k$-vectors, and are denoted by $\hat\Lambda^k(\Real^n)$.  While $\Lambda^k(\R^n)$ is a vector space, $\hat\Lambda^k(\R^n)$ is not.  However, it is a manifold.  Moreover, simple $k$-vectors span the set of all $k$-vectors.  For a simple $k$-vector $\bomega=\bv_1\wedge\dots\wedge\bv_k$, we denote by $[\bomega]$ the subspace of $\Real^n$ spanned by the $\bv_i$.  Let $\bP_{\bomega}$ and $\bP_{\bomega}^\perp$ be the functions that project elements of $\R^n$ into $[\bomega]$ and $[\bomega]^\perp$, respectively, so that $\bP_{\bomega}+\bP^\perp_{\bomega} =\textbf{1}_{n}$.

We define several linear mappings on $k$-vectors and pairs of $k$-vectors.  To do so, we first define them on simple $k$-vectors and then extend them to all $k$-vectors by linearity.  The reason that this process works is because $k$-vectors satisfy a universality property.  See Ros\'en \cite{R19} for the details. As a first example of this, consider a linear mapping $\bL:\Real^n\rightarrow \Real^n$.  This induces a linear mapping from $\Lambda^k(\Real^n)$ to itself that we will also denote by $\bL$ such that it acts on a simple $k$-vector $\bu_1\wedge\dots\wedge\bu_k\in\hat\Lambda^k(\R^n)$ by
\beqn
\bL(\bu_1\wedge\dots\wedge\bu_k)\coloneqq\bL\bu_1\wedge\dots\wedge\bL\bu_k.
\eeqn

Given two simple $k$-vectors $\bu_1\wedge\dots\wedge\bu_k,\bv_1\wedge\dots\wedge\bv_k\in\hat\Lambda^k(\R^n)$, define their inner-product by
\beqn\label{kvip}
(\bu_1\wedge\dots\wedge\bu_k)\cdot(\bv_1\wedge\dots\wedge\bv_k)\coloneqq\text{det}\ 
\begin{blockarray}{cccc}
\begin{block}{(cccc)}
\bu_1\cdot\bv_1&\bu_1\cdot\bv_2&...&\bu_1\cdot\bv_k\\
\bu_2\cdot\bv_1&\bu_2\cdot\bv_2&...&\bu_2\cdot\bv_k\\
\vdots&\vdots&\ddots&\vdots\\
\bu_{k}\cdot\bv_1&\bu_{k}\cdot\bv_2&...&\bu_{k}\cdot\bv_k\\
\end{block}
\end{blockarray}\, .
\eeqn
Orthogonal simple $k$-vectors have the following geometric interpretation: given $\bomega,\bnu\in\hat\Lambda^k(\R^n)$ such that $\bomega\cdot\bnu=0$, then there is an $\ba\in[\bomega]$ such that $\ba\in[\bnu]^\perp$.  

We will denote the set of simple $k$-vectors of unit length by $\hat\Lambda^k_u(\R^n)\coloneqq\cU(\hat\Lambda^k(\Real^n))$.  Elements of $\hat\Lambda^k_u(\R^n)$ can be viewed as oriented $k$-dimensional subspaces of $\R^n$.  Thus, $\hat\Lambda^k_u(\R^n)$ can be used as a double cover for the Grassmannian $\textbf{Gr}(k,\R^n)$.  It follows that $\text{dim}\,\hat\Lambda^k_u(\R^n)=k(n-k)$.  Given a subspace $\cV$ of $\Real^n$, we will use the notation $\hat\Lambda_u^k(\cV)$ for those unit simple $k$-vectors $\bomega$ such that $[\bomega]\subseteq\cV$.

The exterior product $\wedge:\Lambda^k(\R^n)\times\Lambda^l(\R^n)\rightarrow\Lambda^{k+l}(\R^n)$ is the bilinear map such that given simple vectors $\bu_1\wedge\dots\wedge\bu_k\in\hat\Lambda^k(\R^n)$ and $\bv_1\wedge\dots\wedge\bv_l\in\hat\Lambda^l(\R^n)$, we have
\beqn
(\bu_1\wedge\dots\wedge\bu_k)\wedge(\bv_1\wedge\dots\wedge\bv_l)\coloneqq\bu_1\wedge\dots\wedge\bu_k\wedge\bv_1\wedge\dots\wedge\bv_l.
\eeqn
We will also make use of the interior product of multivectors.  For $k\geq l$, the (left) interior product $\lrcorner:\Lambda^l(\R^n)\times\Lambda^k(\R^n)\rightarrow\Lambda^{k-l}(\R^n)$ is the bilinear map such that for $\bnu\in\Lambda^l(\R^n)$ and $\blambda\in\Lambda^k(\R^n)$, $\bnu\lrcorner\blambda$ is the unique $(k-l)$-vector satisfying
\beqn \label{movingdot}
\bomega\cdot(\bnu\lrcorner\blambda)=(\bnu\wedge\bomega)\cdot\blambda,\qquad \bomega\in\Lambda^{k-l}(\R^n).
\eeqn

A useful identity involving the interior product is
\beqn\label{moveuk}
(\bomega\wedge\blambda)\lrcorner\bmu=\blambda\lrcorner(\bomega\lrcorner\bmu)
\eeqn
for all $\bomega\in\Lambda^{k}(\R^n)$, $\blambda\in\Lambda^{l}(\R^n)$, and $\bmu\in\Lambda^{m}(\R^n)$ for $k+l\leq m$.  There is also a varient on Lagrange's identity: if $\ba\in\Real^n$ and $\bomega\in\Lambda^k(\Real^n)$, then
\beqn\label{Lagrange}
|\ba|^2|\bomega|^2=|\ba\lrcorner\bomega|^2+|\ba\wedge\bomega|^2.
\eeqn
Another useful identity is the anticommutation relation
\beqn\label{anticom}
\ba\lrcorner(\bb\wedge\bomega)+\bb\wedge(\ba\lrcorner\bomega)=(\ba\cdot\bb)\bomega,
\eeqn
which holds for all $\ba,\bb\in\Real^n$ and $\bomega\in\Lambda^k(\Real^n)$.  It can be shown that for a simple $k$-vector $\bomega$,
\beqn
[\bomega]=\{\ba\in\R^n\ :\ \ba\wedge\bomega=\textbf{0}\}\quad\text{and}\quad [\bomega]^\perp=\{\ba\in\R^n\ :\ \ba\lrcorner\bomega=\textbf{0}\}.
\eeqn

\begin{lemma}\label{projflip}
If $\bmu\in\hat\Lambda^k(\R^n)$ and $\aa\in\R^n$, then 
\beqn\label{kvectorid}
|\aa\lrcorner\bmu|=|\bmu||\bP_{\bmu}\aa|\qquad\mbox{and}\qquad |\ba||\bP_\aa^\perp\bmu|=|\bP_{\bmu}^\perp \aa|.
\eeqn
\end{lemma}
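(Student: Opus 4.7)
The plan is to reduce both identities to Lagrange's identity \eqref{Lagrange}, using the characterizations $[\bmu]=\{\aa\in\R^n:\aa\wedge\bmu=\textbf{0}\}$ and $[\bmu]^\perp=\{\aa\in\R^n:\aa\lrcorner\bmu=\textbf{0}\}$ that are available because $\bmu$ is simple. For the first identity, I would decompose $\aa=\bP_\bmu\aa+\bP_\bmu^\perp\aa$ and invoke the second characterization to conclude $\bP_\bmu^\perp\aa\lrcorner\bmu=\textbf{0}$, so that by bilinearity $\aa\lrcorner\bmu=\bP_\bmu\aa\lrcorner\bmu$. Applying \eqref{Lagrange} to $\bP_\bmu\aa$ and $\bmu$ gives
\[
|\bP_\bmu\aa|^2|\bmu|^2 = |\bP_\bmu\aa\lrcorner\bmu|^2 + |\bP_\bmu\aa\wedge\bmu|^2,
\]
and the last term vanishes by the first characterization since $\bP_\bmu\aa\in[\bmu]$. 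Taking square roots produces the first identity.

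For the second identity, I read the intended statement as $|\aa||\bP_\aa^\perp\bmu|=|\bmu||\bP_\bmu^\perp\aa|$, since scaling $\bmu\mapsto c\bmu$ shows that a factor of $|\bmu|$ must appear on the right-hand side. I would first establish the auxiliary equality $\aa\wedge\bmu=\aa\wedge\bP_\aa^\perp\bmu$: writing $\bmu=\bv_1\wedge\cdots\wedge\bv_k$ and $\bP_\aa^\perp\bv_i=\bv_i-(\aa\cdot\bv_i/|\aa|^2)\aa$, every term in the expansion of $\aa\wedge\bP_\aa^\perp\bv_1\wedge\cdots\wedge\bP_\aa^\perp\bv_k$ other than $\aa\wedge\bv_1\wedge\cdots\wedge\bv_k$ carries a repeated factor of $\aa$ and therefore vanishes. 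Because $[\bP_\aa^\perp\bmu]\subseteq\{\aa\}^\perp$, the characterization gives $\aa\lrcorner\bP_\aa^\perp\bmu=\textbf{0}$, and Lagrange's identity applied to $\aa$ and the simple $k$-vector $\bP_\aa^\perp\bmu$ then yields $|\aa\wedge\bmu|=|\aa||\bP_\aa^\perp\bmu|$. Applying \eqref{Lagrange} directly to $\aa$ and $\bmu$ and substituting the first identity,
\[
|\aa\wedge\bmu|^2=|\aa|^2|\bmu|^2-|\bmu|^2|\bP_\bmu\aa|^2=|\bmu|^2|\bP_\bmu^\perp\aa|^2,
\]
so comparing both expressions for $|\aa\wedge\bmu|$ completes the proof.

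The main subtlety to flag is that the induced map $\bP_\aa^\perp$ on $\Lambda^k(\R^n)$ is not an orthogonal projection in the $k$-vector sense: for $k\geq 2$ the naive complementary piece $\bP_\aa\bmu$ vanishes identically, so the clean orthogonal decomposition used on the $\aa$-side has no analogue on the $\bmu$-side. The replacement is the expansion argument giving $\aa\wedge\bmu=\aa\wedge\bP_\aa^\perp\bmu$, which is the only nonroutine step; everything else is a mechanical application of \eqref{Lagrange} together with the two characterizations of $[\bmu]$ and $[\bmu]^\perp$.
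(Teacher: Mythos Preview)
Your argument is correct, and it follows a somewhat different route from the paper's. For \eqref{kvectorid}$_1$ the paper picks an orthonormal basis $\{\tt_1,\dots,\tt_k\}$ of $[\bmu]$ and expands $\aa\lrcorner\bmu$ coordinate-wise to read off $|\aa\lrcorner\bmu|^2=\sum_i(\aa\cdot\tt_i)^2=|\bP_\bmu\aa|^2$ directly, whereas you bypass coordinates entirely by decomposing $\aa$ and feeding $\bP_\bmu\aa$ into Lagrange's identity. For \eqref{kvectorid}$_2$ the paper invokes the representation $\bP_\aa^\perp\bmu=\aa\lrcorner(\aa\wedge\bmu)$ from \cite{R19} together with the anticommutation relation \eqref{anticom} to obtain $|\bP_\aa^\perp\bmu|^2=|\aa\wedge\bmu|^2$ (for unit $\aa$), and then finishes with Lagrange; you reach the same intermediate equality $|\aa||\bP_\aa^\perp\bmu|=|\aa\wedge\bmu|$ by the elementary expansion $\aa\wedge\bP_\aa^\perp\bmu=\aa\wedge\bmu$, avoiding both the cited formula and \eqref{anticom}. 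Your approach is more self-contained and treats the two identities uniformly; the paper's is slightly more computational but makes the coordinate picture explicit.

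Your observation about the missing factor of $|\bmu|$ in \eqref{kvectorid}$_2$ is well taken: the paper proves the identity for unit $\bmu$ and then asserts that the general case follows by scaling, but that scaling produces $|\aa||\bP_\aa^\perp\bmu|=|\bmu||\bP_\bmu^\perp\aa|$, exactly as you write. The lemma is only applied later with $\bnu\in\hat\Lambda_u^k(\R^n)$, so the discrepancy does no damage downstream, but your corrected statement is the right one.
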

\begin{proof}
We begin by proving \eqref{kvectorid}$_1$ when the $k$-vector and vector have unit magnitude.  Let $\bmu\in\hat\Lambda_u^k(\R^n)$ and $\aa\in\cU(\R^n)$.  Since $\bmu\in \hat\Lambda_u^k(\R^n)$, we can write 
$\bmu=\tt_1\wedge...\wedge\tt_k$, where $\{\tt_1,...,\tt_k\}$ is an orthonormal basis for $[\bmu]$. Expanding $|\aa\lrcorner\bmu|^2$ we have 
\[
|\aa\lrcorner\bmu|^2=|(\aa\cdot\tt_1)(\tt_2\wedge...\wedge\tt_k)-(\aa\cdot\tt_2)(\tt_1\wedge \tt_3\wedge...\wedge\tt_k)+...+(-1)^{k+1}(\aa\cdot \tt_{k})(\tt_1\wedge....\wedge\tt_{k-1})|^2.
\]
Because each of the terms in the sum are orthogonal and of unit length, it follows that
\begin{align}
|\aa\lrcorner\bmu|^2&=|(\aa\cdot\tt_1)(\tt_2\wedge...\wedge\tt_k)|^2+|(\aa\cdot\tt_2)(\tt_1\wedge \tt_3\wedge...\wedge\tt_k)|^2+...+|(\aa\cdot \tt_{k})(\tt_1\wedge....\wedge\tt_{k-1})|^2\\
&=|(\aa\cdot\tt_1)|^2+\dots+|(\aa\cdot\tt_k)|^2 =|\bP_{\bmu}\aa|^2.\label{innerprojbit}
\end{align}
To generalize the result to $\bmu$ and $\ba$ without unit magnitude, notice that
\beqn\label{scaling}
|\ba\lrcorner\bmu|=|\bmu||\ba| \Big |\frac{\ba}{|\ba|}\lrcorner \frac{\bmu}{|\bmu|}\Big|=|\bmu| |\ba||\bP_{\bmu}\frac{\ba}{|\ba|}|=|\bmu||\bP_{\bmu}\ba|.
\eeqn

To establish \eqref{kvectorid}$_2$, we first represent the projection onto the space perpendicular to $\ba$ by $\bP_\aa^\perp\bmu=\aa\lrcorner(\aa\wedge \bmu)$; see\cite{R19}.  So by \eqref{movingdot}, we have 
\beqn\label{firstbit}
|\bP_\aa^\perp\bmu|^2=\aa\lrcorner(\aa\wedge \bmu)\cdot \aa\lrcorner(\aa\wedge \bmu)=\aa\wedge(\aa\lrcorner(\aa\wedge \bmu))\cdot \aa\wedge \bmu.
\eeqn
By \eqref{anticom}, the fact that $\aa\wedge\aa\wedge\bmu=\bzero$, and because $\aa$ is a unit vector, it follows that 
\beqn\label{nextbit}
\aa\wedge(\aa\lrcorner(\aa\wedge \bmu))\cdot \aa\lrcorner \bmu=\big[|\aa|^2\aa\wedge \bmu-\aa\lrcorner (\aa\wedge\aa\wedge\bmu) \big]\cdot \aa\lrcorner \bmu=| \aa\wedge \bmu|^2.
\eeqn
Next, \eqref{Lagrange} implies
\beqn\label{lastbit}
| \aa\wedge \bmu|^2=|\aa|^2|\bmu|^2-|\aa\lrcorner\bmu|^2=1-|\aa\lrcorner\bmu|^2.
\eeqn
Combining \eqref{firstbit}--\eqref{lastbit} and using \eqref{kvectorid}$_1$ we have 
\[
|\bP_{\aa}^\perp\bmu|^2=1-|\bP_{\bmu}\aa|^2=|\bP_{\bmu}^\perp\aa|^2.
\]
Generalizing the result to when $\ba$ and $\bmu$ do not have unit magnitude follows from a scaling argument similar to that in \eqref{scaling}.
\end{proof}

We will illustrate the interior product by mentioning a fact that will be of use later.  Let $\bomega\in\hat\Lambda_u^4(\Real^n)$.  It follows that there are orthonormal vectors $\bu_1,\bu_2,\bu_3$ and $\bu_4$ such that $\bomega=\bu_1\wedge\bu_2\wedge\bu_3\wedge\bu_4$.  From the definitions above, for $\ba\in\Real^n$
\beqn
\ba\lrcorner \bomega= (\ba\cdot\bu_1)\bu_2\wedge\bu_3\wedge\bu_4-(\ba\cdot\bu_2)\bu_1\wedge\bu_3\wedge\bu_4+(\ba\cdot\bu_3)\bu_1\wedge\bu_2\wedge\bu_4-(\ba\cdot\bu_4)\bu_1\wedge\bu_2\wedge\bu_3.
\eeqn
Thus,
\beqn
\bu_2\lrcorner \bomega= -\bu_1\wedge\bu_3\wedge\bu_4,
\eeqn
and for $\bb\in\Real^n$,
\beqn
\bb\wedge(\bu_2\lrcorner\bomega)=\bu_1\wedge\bb\wedge\bu_3\wedge\bu_4.
\eeqn
This kind of identity generalizes.  Namely, if $\bomega=\bu_1\wedge\dots\wedge\bu_k\in\hat\Lambda_u^k(\Real^n)$ and $\bb\in\Real^n$, then $\bb\wedge(\bu_i\lrcorner\bomega)$ is the $k$-vector obtained by replacing $\bu_i$ with $\bb$ in $\bomega$---that is,
\beqn\label{replacewedgeID}
\bb\wedge(\bu_i\lrcorner\bomega)=\bu_1\wedge\dots\wedge\bu_{i-1}\wedge\bb\wedge\bu_{i+1}\wedge\dots\bu_k.
\eeqn

\smallskip
\section{Set of disks}\label{Sect3}

In this section we discuss the set of $(n-k)$-dimensional disks in $\Real^n$.  These will be used to define a fractional $k$-dimensional measure.  Several measure theoretic properties of these disks will be established that clarify which subset of disks is being integrated over when defining the fractional measure.  We find it convenient to work with oriented disks, so for the rest of the paper a disk refers to an oriented disk.

Each $(n-k)$-dimensional disk can be represented by its center $p$, radius $r>0$, and perpendicular $k$-dimensional (oriented) subspace represented by $\bom\in \hat{\Lambda}^k_u(\R^{n})$ so that
\beqn\label{diskdef}
D(p,\bom, r)\coloneqq\{p+\xi \mathbf{v}\in\R^n:\mathbf{v}\in\cU([\bomega]^\perp), \xi\in [0,r)\}
\eeqn
is such a disk.  By the boundary $\pa D(p,\bom, r)$ of one of these disks we mean the $(n-2)$-dimensional manifold 
\[
\pa D(p,\bom, r)\coloneqq\{p+r \mathbf{v}\in\R^n:\mathbf{v}\in\cU([\bomega]^\perp)\}.
\]
Thus, the set of all disks can be described by the set
\[
\D\coloneqq\R^n\times \hat{\Lambda}^k_u(\R^{n})\times \R^+.
\]
Note that this set has Hausdorff dimension $n+k(n-k)+1$.

Fix a bounded $k$-dimensional manifold $\cM$ such that its boundary $\partial\cM$ is a $(k-1)$-dimensional manifold and, thus, $\overline\cM$ is a manifold with boundary. Let $\textbf{vol}_\cM$ denote a volume form for $\overline\cM$, so that for each $z\in\overline\cM$, $\textbf{vol}_\cM(z)\in\hat\Lambda^k_u(\Real^n)$ and $[\textbf{vol}_\cM]=T_z\overline\cM$.  Keep in mind that we are not assuming that $\cM$ is orientable, so it is possible that $\textbf{vol}_\cM$ is not continuous.  We define the following subsets of $\D$: 
\begin{align*}
\D_{\pa \M}&\coloneqq\{(p,\bom,r)\in\D:\H^0(\overline{D}(p,\bom,r)\cap \pa \M)\neq 0 \},\\
\D_{\tan}&\coloneqq\{(p,\bom,r)\in \D: \text{there is } z\in \overline{D}(p,\bom,r)\cap \overline\M\text{ such that } \bomega\cdot\textbf{vol}_\cM(z)=0\},\\
\D_\infty&\coloneqq\{(p,\bom,r)\in \D: \H^0(\overline{D}(p,\bom,r)\cap \M)=\infty\},\\ 
\D_{\pa D}&\coloneqq\{(p,\bom,r)\in \D: \H^0(\pa D(p,\bom,r)\cap \M)\geq1\}. 
\end{align*}
Roughly speaking, the interpretation of these sets is as follows: $\cD_{\partial \cM}$ consists of those disks that intersect the boundary of $\cM$, $\cD_\text{tan}$ consists of those disks that are tangent to the manifold, $\cD_\infty$ is the collection of disks that intersect $\cM$ an infinite number of times, and $\cD_{\partial D}$ consists of those disks whose boundary intersects $\cM$.

In the following lemma we show that all of these sets have at most dimension $n+k(n-k)$, one less than the dimension of $\cD$. To do so we will use the set
\beqn\label{Wdef}
\W\coloneqq\{(\mathbf{a},\bom)\in \cU(\R^n)\times \hat{\Lambda}^k_u(\R^{n}): \mathbf{a}\in [\bom]^\perp\}.
\eeqn
Each element in $\W$ gives a direction $\mathbf{a}$ and a unit magnitude simple $k$-vector $\bom$, which gives a subspace that is perpendicular to $\mathbf{a}$.  Put another way, $\bom$ is a $k$-vector in the $(n-1)$-dimensional space $[\ba]^\perp$.  Therefore, the dimension of $\W$ is $n-1+k(n-k-1)$.

\begin{lemma}\label{lemmeasure}
Given an open bounded set $\cE\subseteq\cD$, the following results hold:
\begin{enumerate}
\item $\H^{n+k(n-k)}(\cD_{\pa \M}\cap\E)<\infty$,
\item $\H^{n+k(n-k)}(\cD_{\rm{tan}}\cap\E)<\infty$,\label{tan}
\item$\H^{n+k(n-k)}(\cD_{\pa D}\cap\E)<\infty$,\label{paD} 
\item$\D_\infty\subseteq\D_{\text{\rm tan}}$.
\end{enumerate}
\end{lemma}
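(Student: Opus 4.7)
The plan is to handle parts (1)--(3) by a common strategy: exhibit each of the sets $\D_{\pa\M}$, $\D_{\text{tan}}$, $\D_{\pa D}$ as a Lipschitz image of a smooth parameter manifold of dimension $n+k(n-k)$, and then invoke the standard fact that a Lipschitz image of a bounded subset of a $d$-dimensional smooth manifold has finite $d$-dimensional Hausdorff measure. For part (1), every disk $D(p,\bom,r)$ meeting $\pa\M$ contains a point $z\in\pa\M$ of the form $z=p+\xi\mathbf{v}$ with $\mathbf{v}\in\cU([\bom]^\perp)$ and $\xi\in[0,r]$, so I would parametrize $\D_{\pa\M}$ by the smooth surjection
\[
\Phi_1(z,\bom,\mathbf{v},\xi,r) := (z-\xi\mathbf{v},\bom,r)
\]
on the bundle $\{z\in\pa\M,\ \bom\in\hat\Lambda^k_u(\R^n),\ \mathbf{v}\in\cU([\bom]^\perp),\ 0\leq\xi\leq r,\ r>0\}$, of dimension $(k-1)+k(n-k)+(n-k-1)+1+1=n+k(n-k)$. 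Boundedness of $\E$ bounds $r$ and $p$, whence $\xi=|z-p|$ and $z\in\pa\M$ are bounded too, so the $\Phi_1$-preimage of $\E$ is bounded in the parameter space, yielding (1). Part (3) is handled analogously by the map $(z,\bom,\mathbf{v},r)\mapsto(z-r\mathbf{v},\bom,r)$ on $\M\times\hat\Lambda^k_u(\R^n)\times\cU([\bom]^\perp)\times\R^+$, with dimension $k+k(n-k)+(n-k-1)+1=n+k(n-k)$; here $z=p+r\mathbf{v}$ plays the role of the $\pa D$-$\M$ intersection point.

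Part (2) follows the same template as part (1), but with $z$ ranging over $\overline\M$ and $\bom$ restricted by the tangency condition $\bom\cdot\textbf{vol}_\M(z)=0$. The main technical point---and the only place where I expect nontrivial work---is arguing that this constraint drops the effective dimension of the $\bom$ parameter by at least one. The cleanest approach is: for each fixed $z$, the map $\bom\mapsto\bom\cdot\textbf{vol}_\M(z)$ is a nonzero real-analytic (indeed polynomial-in-coordinates) function on the compact real-analytic manifold $\hat\Lambda^k_u(\R^n)$, since it equals $1$ at $\bom=\textbf{vol}_\M(z)$; hence its zero locus has Hausdorff dimension at most $k(n-k)-1$. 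Alternatively, via the geometric interpretation from Section~\ref{Sect2} the constraint is equivalent to $[\bom]^\perp\cap T_z\overline\M\neq\{0\}$, a classical codimension-$1$ Schubert condition on the Grassmannian, parametrized directly by first choosing a unit $\mathbf{w}\in T_z\overline\M$ ($k-1$ dimensions) and then $[\bom]\subseteq\{\mathbf{w}\}^\perp$ ($k(n-k-1)$ dimensions), totalling $k(n-k)-1$. Appending $\mathbf{v},\xi,r$ then gives a parametrizing set of dimension $k+(k(n-k)-1)+(n-k-1)+1+1=n+k(n-k)$, completing (2) via the same boundedness and Lipschitz-image argument.

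For part (4), suppose $(p,\bom,r)\in\D_\infty$. Then $\overline D(p,\bom,r)\cap\overline\M$ is compact and infinite, so admits an accumulation point $z^*\in\overline D\cap\overline\M$. Pick $z_m\in D\cap\M$ with $z_m\to z^*$ and $z_m\neq z^*$; after passing to a subsequence, $(z_m-z^*)/|z_m-z^*|\to\mathbf{w}$ for some unit vector $\mathbf{w}$. Because $z_m,z^*\in p+[\bom]^\perp$, we have $\mathbf{w}\in[\bom]^\perp$; because $\overline\M$ is a $C^1$ manifold with boundary (whose tangent cone equals its tangent space), $\mathbf{w}\in T_{z^*}\overline\M=[\textbf{vol}_\M(z^*)]$. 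Hence $\mathbf{w}$ is a nonzero common element of $[\bom]^\perp$ and $[\textbf{vol}_\M(z^*)]$, and the geometric characterization of orthogonality of simple $k$-vectors recalled in Section~\ref{Sect2} forces $\bom\cdot\textbf{vol}_\M(z^*)=0$, so $(p,\bom,r)\in\D_{\text{tan}}$.
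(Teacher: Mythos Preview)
Your proposal is correct and follows essentially the same approach as the paper: for (1)--(3), exhibit each set as a Lipschitz image of a parameter manifold of dimension $n+k(n-k)$ (the paper does this via the maps $\Xi$ and $\Psi$ on $\overline{\M}\times\W\times\R^+_0\times\R^+$ and $\M\times\W\times\R^+$, which are the same as your bundles up to renaming $\mathbf{v}\leftrightarrow\mathbf{a}$), and for (4) use compactness to produce an accumulation point and argue transversally. Your treatment of Item~2 is in fact more careful than the paper's, which simply asserts ``subtract one because of the constraint $\bomega\cdot\textbf{vol}_\cM(z)=0$''; your Schubert-style parametrization (choose $\mathbf{w}\in\cU(T_z\overline\M)$, then $\bomega\in\hat\Lambda^k_u(\{\mathbf{w}\}^\perp)$) supplies the missing justification, and the real-analytic alternative is a nice backup.
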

\begin{proof}
For the proof of the first part of this lemma we will make use of the function $\Xi$ defined in \eqref{xi} of the Appendix.  Set $\E_\Xi=\Xi^{-1}(\E)$ and choose $R>0$ so that, if $(p,\bom,r)\in \E$, then  $r\in (0,R]$.

\textit{Item 1.} Consider the set
\[
A_{\pa \M}\coloneqq \pa \M\times \W \times \R^+\times \R^+.
\]
Notice that $A_{\pa \M}$ has Hausdorff dimension $n+k(n-k)$ and $\D_{\pa \M}\cap \E\subseteq \Xi(A_{\pa \M}\cap \E_{\Xi})$. Since $\Xi$ is Lipschitz on $A_{\pa \M}$, having $\H^{n+k(n-k)}(A_{\pa\M}\cap \E_\Xi)<\infty$ implies $\H^{n+k(n-k)}(\D_{\pa\M}\cap \E)<\infty$.

\textit{Item 2.} 
Consider the set
\begin{align*}
A_{\text{tan}}\coloneqq\bigcup_{z\in \M} \big(&\{z\}\times \{(\mathbf{a},\bom)\in\W: \bomega\cdot\textbf{vol}_\cM(z)=0\}\big)\times\{ (\xi,r)\in\R^+\times \R^+:0\le\xi\le r\le R\}.
\end{align*}
The argument in this case is similar to that as in Item 1 with $A_{\partial\cM}$ replaced by $A_\text{tan}$, but we need to calculate the dimension of $A_{\text{tan}}$.  For $z$ there are $k$ dimensions, $(\ba,\bomega)$ there are $n-1+k(n-k-1)$ dimensions, but you have to subtract one because of the constraint $\bomega\cdot\textbf{vol}_\cM(z)=0$, and $\xi$ and $r$ consist of two more dimensions.  Adding these up one finds that the Hausdorff dimension of $A_{\text{tan}}$ is $n+k(n-k)$.

\textit{Item 3.} To describe the disks in $\D_{\pa D}$, we define a map 
\[
\Psi: \M\times \W \times \R^+\rightarrow \R^n\times \hat{\Lambda}^k_u(\R^n) \times\R^+\]
 by
\begin{equation}\label{Psi}
\Psi(z,\textbf{a},\bom,r)\coloneqq(z+r\textbf{a},\bom,r), \qquad (z,\textbf{a},\bom,r)\in \M\times\W\times \R^+.
\end{equation}
Set $\cE_\Psi\coloneqq\Psi^{-1}(\cE)$ and consider
\[
A_{\pa D}\coloneqq \M\times \W \times \R^+.
\]
As in the previous parts, one can see that $\D_{\pa D}\cap \E\subseteq \Psi(A_{\pa D}\cap \E_\Psi)$. Since $\Psi$ is Lipschitz and $\H^{n+k(n-k)}(A_{\pa D}\cap \E_\Psi)<\infty,$ it follows that $\H^{n+k(n-k) }(\D_{\pa D} \cap\E)<\infty.$

\textit{Item 4.}  
 Consider $(p,\bom,r)\in \D_\infty$, so that $\H^0(\overline{D}(p,\bom,r)\cap \M)=\infty.$ Because $\overline{D}(p,\bom,r)\cap \overline{\M}$ is compact it follows that it has an accumulation point, which we will call $z\in \overline{D}(p,\bom,r)\cap \overline{\M}$. Suppose that $(p,\bom,r)\notin\D_{\tan},$ so that $\bomega\cdot \textbf{vol}_\cM(z)\not=0$.  This means that $T_z\overline\cM$ and $[\bomega]^\perp$ have no directions in common.  In a small enough neighborhood of $z$ we can locally approximate $\overline\M$ with its tangent plane. Since no direction in the tangent plane matches the directions that span the disk, there are no points in a small neighborhood of $z$ in $\overline{D}(p,\bom,r)\cap \overline\M$ other than $z$, which contradicts $z$ being an accumulation point.  Thus, we must have $(p,\bom,r)\in\D_{\tan}$.

\end{proof}

We also need the set of disks
\beqn
\cD(\cM)\coloneqq\{(p,\bomega,r)\in\cD : \cH^0(D(p,\bomega,r)\cap\cM)\ \text{is odd}\},
\eeqn
which consists of those disks that intersect $\cM$ an odd number of times.  To define the fractional notion of measure, we will be integrating over this set of disks, which has Hausdorff dimension $n+k(n-k)+1$. It follows from Lemma~\ref{lemmeasure} that when counting the intersections between a disk and $\cM$ it does not matter whether you include intersections that occur on the boundary of $\cM$ or the boundary of the disk.  Moreover, when counting intersections you can ignore points at which the surface is tangent to the disk.

\section{Fractional measure} \label{Sect4}

In this section we define a fractional notion of $k$-dimensional measure and show that, in an appropriate limit, it converges to the $\H^k$-measure up to a multiplicative constant. 

Let $\Om$ be an open, bounded set. We define the $k$-dimensional $\s$-measure of $\M$ relative to $\Om$ by
\begin{equation}\label{smeasdef}
\text{Meas}_{\s}^k(\M,\Om)\coloneqq\int_{\D(\M)}r^{k-n-\s}  \sup_{\bu\in \cU([\bomega]^\perp)}\chi_{\Om}(p+r\mathbf{u})\,d \H^{n+k(n-k)+1}(p,\bom,r).
\end{equation}
For simplicity suppose that $\M\subseteq\Om.$ Defining
\[
\D_\Om(\M)\coloneqq\{ (p,\bom,r)\in \D(\M):\pa D(p,\bom,r)\cap\Om\neq \emptyset)\},
\]
we have
\begin{equation}\label{area formula}
\text{Meas}_{\s}^k(\M,\Om)=\int_{\D_\Om(\M)}r^{k-n-\s} \,d \H^{n+k(n-k)+1}(p,\bom,r).
\end{equation}

First we verify that this integral is finite. In the following we let $\diam(\Om)$ denote the diameter of $\Om;$ this is finite because $\Om$ is bounded. Using the function defined in \eqref{xi}, it turns out that
\begin{equation}\label{oddballs}
\D_\Om(\M)\subseteq \Xi\Big(\bigcup_{\xi\in \R^+_0}\M\times \W\times\{\xi\}\times [\xi,\xi+\diam(\Om)]\Big).
\end{equation}
Indeed, if $(p,\bom,r)\in \D_\Om(\M)$, then $D(p,\bom,r)$ intersects $\M$ a finite number of times, so we can find $z\in \M\cap D(p,\bom,r)$ with minimum distance to $p$ such that $(p-z)\in[\bom]^\perp$. Set $\xi=|p-z|$ and $\mathbf{a}=(p-z)/\xi$ if $z\not =p$ and $\ba$ can be arbitrary if $z=p$.  It follows that $\Xi(z,\mathbf{a},\bom, \xi, r)=(p,\bom,r).$ Moreover, since $z\in D(p,\bom,r)$ we know $\xi\le r,$ and $\pa D(p,\bom,r)\cap\Om\neq \emptyset$ implies $r\le \xi+\diam(\Om)$. To verify this last claim, let $q\in\partial D(p,\bomega,r)\cap\Omega$ and notice that
\beqn
r=|p-q|\leq |p-z|+|z-q|\leq\xi+\diam(\Omega)
\eeqn
since $z\in\M\subseteq \Omega$. So $(p,\bom,r)\in \M\times \W\times \cup_{\xi\in \R^+_0}(\{\xi\}\times [\xi,\xi+\diam(\Om)])$ and, thus,\eqref{oddballs} holds. 

We can now utilize the change of variables from Lemma \ref{COV} to confirm that the $\s$-measure is finite.  To begin with, setting $m=\text{dim}\,\cW$,
\begin{align*}
\text{Meas}_{\s}^k(\M,\Om)\le&\int_\M\int_0^\infty\int_{\W}\int_\xi^{\xi+\diam(\Om)} r^{k-n-\s}  
 \frac{\xi^{n-k-1}}{2^{k/2}}|\textbf{vol}_{\M}(z)\cdot \bomega|
d r d\H^{m}(\mathbf{a},\bom)d \xi dz\\
\le& \H^k(\M)2^{-k/2}\H^m(\W) \int_0^\infty\int_\xi^{\xi+\diam(\Om)} r^{k-n-\s}\xi^{n-k-1} drd\xi\\
=&\frac{\H^k(\M)2^{-k/2}\H^m(\W)}{n-k+\s-1}\int_0^\infty \xi^{-\s}- \frac{\xi^{n-k-1}}{(\xi+\diam(\Om))^{n-k+\s-1}} d \xi.
\end{align*}
To see why this integral is finite, fix $\e\in(0,\infty)$. We decompose the integral as 
\begin{align*}
\int_0^\infty \xi^{-\s}- \frac{\xi^{n-k-1}}{(\xi+\diam(\Om))^{n-k+\s-1}} d\xi=&\int_0^\e \xi^{-\s}- \frac{\xi^{n-k-1}}{(\xi+\diam(\Om))^{n-k+\s-1}} d\xi\\
&+\int_\e^\infty \xi^{-\s}- \frac{\xi^{n-k-1}}{(\xi+\diam(\Om))^{n-k+\s-1}}  d \xi.
\end{align*}
The first integral is finite because $\s\in(0,1)$ and the second term in the integrand is bounded. For the latter integral we apply the mean value theorem to 
\[
h_{\xi}(x)\coloneqq\xi^{n-k-1}(\xi+x)^{1+k-n-\s},\qquad x \in(0,\diam(\Om)).
\]
For each $\xi\in(\e,\infty)$ there is $c_\xi\in(0,\diam(\Om))$ such that 
\begin{align*}
h_{\xi}(0)-h_{\xi}(\diam(\Om))&=h_{\xi}'(c_\xi)(0-\diam(\Om))\\
&=\diam(\Om)(n-k-1+\s)\xi^{n-k-1}(\xi+c_\xi)^{k-n-\s}\\
&\le \diam(\Om)(n-k-1+\s)\xi^{-\s-1}.
\end{align*}
So 
\begin{align}
\int_\e^\infty \xi^{-\s}- \frac{\xi^{n-k-1}}{(\xi+\diam(\Om))^{n-k+\s-1}}d\xi &\le \int_\e^\infty \diam(\Om)(n-k-1+\s)\xi^{-\s-1}d\xi\nonumber\\
&=\frac{\diam(\Om)(n-k-1+\s)\e^{-\s}}{\s}.\label{xibound}
\end{align}
Therefore, the $\s$-measure is finite. 

\begin{remark}\label{keq0}
 {\rm The definition of $\text{Meas}^k_\sigma$ makes sense not only when $1\leq k<n$, but also in the case $k=0$ if interpreted properly.  By definition, the set of $0$-vectors are numbers, so $\hat\Lambda_u^0(\Real^n)=\{-1,1\}$.  Moreover, also by convention, $[\pm 1]=\{\textbf{0}\}$, and so $[\pm 1]^\perp=\Real^n$.  Thus, an $n$-dimensional disk as defined in \eqref{diskdef} is an oriented ball, the orientation depending on the sign of $\bomega\in\hat\Lambda_u^0(\Real^n)=\{-1,1\}$.  

The $\sigma$-measure defined in \eqref{smeasdef} is not useful when $k=n$.  In this case, $\hat\Lambda_u^n(\Real^n)=\{-\textbf{vol}_n,\textbf{vol}_n\}$, where $\textbf{vol}_n$ is a volume form for $\Real^n$.  Since $\cU([\textbf{vol}_n]^\perp)=\cU(\{\textbf{0}\})=\emptyset$, each disk in \eqref{diskdef} is the empty set.  Thus, there are no disks that intersect $\cM$ an odd number of times and, so, the integral in \eqref{smeasdef} is always zero.}
\end{remark}

Our next goal is to show that the fractional $k$-dimensional measure converges in an appropriate limit to the $k$-dimensional Hausdorff measure up to a multiplicative constant.  To calculate this constant, we use two preliminary results, both of which will use the set of $k$ orthonormal vectors that live in a subspace $\cX$ of $\Real^n$:
\beqn\label{Ukdef}
\mathcal{U}^k_\perp(\cX)\coloneqq\{(\uu_1,...,\uu_k):\uu_i\in\cU(\cX), \uu_i\cdot \uu_j=0, 1\leq i<j\leq k\}.
\eeqn

\begin{lemma}\label{itlemma}
Let $\cZ$ be a $q$-dimensional subspace of $\Real^n$, $\bmu\in\hat\Lambda^k(\cZ)$, and $p\in\Nat$ such that $p\leq k$ and $p\leq q$.  It follows that
\begin{multline}\label{itlemmaeq}
\int_{\cU^p_\perp(\cZ)}|(\bu_1\wedge\dots\wedge\bu_p)\lrcorner \bmu| d\cH^{\sfrac{p(2q-p-1)}{2}}(\bu_1,\dots,\bu_p)\\
=\frac{2^{\sfrac{p+1}{2}}\pi^{\sfrac{q-p+1}{2}}\Gamma(\sfrac{k-p+2}{2})}{\Gamma(\sfrac{k-p+1}{2})\Gamma(\sfrac{q-p+2}{2})} \int_{\cU^{p-1}_\perp(\cZ)}|(\bu_1\wedge\dots\wedge\bu_{p-1})\lrcorner \bmu| d\cH^{\sfrac{(p-1)(2q-p)}{2}}(\bu_1,\dots,\bu_{p-1}).
\end{multline}
\end{lemma}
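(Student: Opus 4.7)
The plan is to disintegrate the Hausdorff measure on the Stiefel manifold $\cU^p_\perp(\cZ)$ over the projection $(\bu_1,\dots,\bu_p)\mapsto(\bu_1,\dots,\bu_{p-1})$, whose fiber is the unit sphere of $\cY\coloneqq\{\bu_1,\dots,\bu_{p-1}\}^\perp\cap\cZ$, and then evaluate the resulting inner integral with a beta-function computation.

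First, I would apply \eqref{moveuk} to write
\[
(\bu_1\wedge\dots\wedge\bu_p)\lrcorner\bmu = \bu_p\lrcorner\bnu, \qquad \bnu\coloneqq(\bu_1\wedge\dots\wedge\bu_{p-1})\lrcorner\bmu.
\]
Iterating \eqref{moveuk} shows that $\bnu_j\coloneqq(\bu_1\wedge\dots\wedge\bu_j)\lrcorner\bmu = \bu_j\lrcorner\bnu_{j-1}$ is a $(k-j)$-vector living in the $(k-j+1)$-dimensional subspace $[\bnu_{j-1}]$, hence automatically simple; in particular $\bnu$ is a simple $(k-p+1)$-vector. Applying \eqref{moveuk} in reverse gives
\[
\bu_j\lrcorner\bnu = (\bu_1\wedge\dots\wedge\bu_{p-1}\wedge\bu_j)\lrcorner\bmu = 0, \qquad j\leq p-1,
\]
since $\bu_j$ appears twice in the wedge, so by the characterization of $[\bnu]^\perp$ recalled just before Lemma~\ref{projflip}, $[\bnu]\subseteq\cY$, where $\cY$ has dimension $q-p+1$.

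Next, I would parametrize $\cU^p_\perp(\cZ)$ iteratively by choosing $\bu_i\in\cU(\{\bu_1,\dots,\bu_{i-1}\}^\perp\cap\cZ)$. This parametrization differs from the metric inherited from $\cZ^p$ by a Jacobian of $2^{p(p-1)/4}$: each of the $\binom{p}{2}$ orthogonality constraints $\bu_i\cdot\bu_j=0$ for $i>j$ couples tangent perturbations so that the embedding metric is twice the parameter-metric contribution in that direction (as in \eqref{SOnmeas}). Taking the ratio with the analogous Jacobian $2^{(p-1)(p-2)/4}$ for the $(p-1)$-frame yields the disintegration
\[
d\cH^{\sfrac{p(2q-p-1)}{2}}_{\cU^p_\perp(\cZ)} = 2^{\sfrac{p-1}{2}}\,d\cH^{\sfrac{(p-1)(2q-p)}{2}}_{\cU^{p-1}_\perp(\cZ)}\otimes d\cH^{q-p}_{\cU(\cY)}.
\]
Combined with Lemma~\ref{projflip}, which gives $|\bu_p\lrcorner\bnu|=|\bnu|\,|\bP_\bnu\bu_p|$, this reduces the problem to computing the spherical integral $\int_{\cU(\cY)}|\bP_\bnu\bu_p|\,d\cH^{q-p}(\bu_p)$ with $[\bnu]$ a $(k-p+1)$-dimensional subspace of the $(q-p+1)$-dimensional space $\cY$.

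For this inner integral, I would use the polar decomposition $\bu_p=\cos\theta\,\bw'+\sin\theta\,\bw''$ with $\bw'\in\cU([\bnu])$, $\bw''\in\cU([\bnu]^\perp\cap\cY)$, $\theta\in[0,\pi/2]$, so that $|\bP_\bnu\bu_p|=\cos\theta$ and the integral becomes $\omega_{k-p}\,\omega_{q-k-1}\int_0^{\pi/2}\cos^{k-p+1}\theta\sin^{q-k-1}\theta\,d\theta$; applying \eqref{betatrig}, \eqref{betagamma}, and \eqref{areaball} simplifies this to $2\pi^{(q-p+1)/2}\Gamma(\sfrac{k-p+2}{2})/[\Gamma(\sfrac{k-p+1}{2})\Gamma(\sfrac{q-p+2}{2})]$. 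Multiplying by the Jacobian $2^{(p-1)/2}$ from the disintegration produces exactly the claimed prefactor $2^{(p+1)/2}\pi^{(q-p+1)/2}\Gamma(\sfrac{k-p+2}{2})/[\Gamma(\sfrac{k-p+1}{2})\Gamma(\sfrac{q-p+2}{2})]$, while $|\bnu|=|(\bu_1\wedge\dots\wedge\bu_{p-1})\lrcorner\bmu|$ remains inside the outer integral, yielding \eqref{itlemmaeq}. The most delicate step is the disintegration, since $\cU^p_\perp(\cZ)$ carries the Hausdorff measure of its embedding in $\cZ^p$ rather than the product-of-spheres measure, and the $2^{(p-1)/2}$ factor is easy to overlook; the beta-function calculation is then routine.
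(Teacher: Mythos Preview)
Your proposal is correct and follows essentially the same route as the paper: disintegrate the Stiefel integral along the projection dropping $\bu_p$, pick up the factor $2^{(p-1)/2}$, and evaluate the fiber integral over $\cU(\cY)$ via the polar decomposition of Lemma~\ref{subspint}, using Lemma~\ref{projflip} to reduce $|\bu_p\lrcorner\bnu|$ to $|\bnu||\bP_\bnu\bu_p|$. The only cosmetic difference is how the Jacobian $2^{(p-1)/2}$ is obtained: the paper computes it directly from the coarea formula for $H(\bu_1,\dots,\bu_p)=\bu_p$ using the explicit orthonormal basis $\{\bE_{ij}\}\cup\{\bF_{ij}\}$ of $T\cU^p_\perp(\cZ)$, whereas you take the ratio $2^{p(p-1)/4}/2^{(p-1)(p-2)/4}$ of the total Stiefel-versus-product-of-spheres factors; both are valid and yield the same constant.
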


\begin{proof}
The idea is to write the integral $\cU_\perp^p(\cZ)$ as an iterated integral using the coarea formula and then compute the inner integral, which is over a set of unit vectors, using Lemma~\ref{subspint}.  Towards this end, define $H:\cU^p_\perp(\cZ)\rightarrow \cU(\cZ)$ by $H(\uu_1,...,\uu_p)= \uu_p$.  Consider a fixed $(\bu_1,\dots,\bu_p)\in\cU^p_\perp(\cZ)$ and a collection of vectors $\{\be_1,\dots,\be_{q-p}\}$ such that $\{\bu_1,\dots,\bu_p,\be_1,\dots,\be_{q-p}\}$ forms an orthonormal basis for $\cZ$.  It follows that an orthonormal basis for $T_{(\uu_1,...,\uu_p)}\mathcal{U}^p_{\perp}(\cZ)\subseteq\cZ^p$ is the union of the following two sets:

\begin{enumerate}
\item $\{\bE_{ij}: 1\le i\le p, 1\le j\le q-p\},$ where $\bE_{ij}\coloneqq(\mathbf{0},...,\ee_j,...,\mathbf{0})$ is a list of $p$ vectors with $\ee_j$ in the $i$th position. This set contains $p(q-p)$ vectors.
\item $\{\bF_{ij}: 1\le i<j\le p\}$, where $\bF_{ij}\coloneqq\frac{1}{\sqrt{2}}(\textbf{0},..,\bu_j,...,-\bu_i,...,\textbf{0})$ is a list of $p$ vectors with $\uu_j$ in the $i$th position and $-\uu_i$ in the $j$th position.  This set contains $p(p-1)/2$ vectors.
\end{enumerate}
An orthonormal basis for $T_{\bu_p}\cU(\cZ)$ is $\{\bu_1,\dots,\bu_{p-1},\be_1,\dots,\be_{q-p}\}$.  Notice that
\begin{enumerate}

\item $\nabla H(\bu_1,\dots,\bu_p)\bE_{ij}=
\begin{cases}
\bzero & \text{if}\ i\not = p\\
\be_j & \text{if}\ i=p
\end{cases}\quad$ for $1\leq j\leq q-p$,

\item $\nabla H(\bu_1,\dots,\bu_p)\bF_{ij}=
\begin{cases}
\bzero & \text{if}\ j\not = p,\\
-\sfrac{1}{\sqrt{2}}\bu_i & \text{if}\ j=p
\end{cases}\quad$ for $1\leq i\leq p-1$.

\end{enumerate}
It follows that the matrix $[\nabla H]$ of $\nabla H$ relative to these basis is
\[
[\nabla H]= \begin{blockarray}{ccccc}
\bE_{ij} & \bE_{ij} & \bF_{ij} & \bF_{ij} &\\
i\not=j& i=p & j\not=p &j=p&\\
\begin{block}{(cccc)c}
\bzero&\bzero& \bzero&-\sfrac{1}{\sqrt{2}}\textbf{1}_{p-1} &p-1\\
\bzero&\bone_{q-p}&\bzero&\bzero&q-p\\
  \end{block}
\end{blockarray}
\]
and, hence,
\beqn
\det(\nabla H\nabla H^T)=2^{1-p}.
\eeqn
Thus, the coarea formula combined with \eqref{moveuk} yields
 \begin{align}\label{outside}
\nonumber&\int_{\cU^p_\perp(\cZ)}|(\bu_1\wedge\dots\wedge\bu_p)\lrcorner \bmu| d\cH^{\sfrac{p(2q-p-1)}{2}}(\bu_1,\dots,\bu_p)=\\
&2^{\frac{p-1}2}\int_{\mathcal{U}_\perp^{p-1}(\cZ)}\int_{\cU(\cZ\cap\{\uu_1,...,\uu_{p-1}\}^\perp)}|\uu_p\lrcorner ((\uu_1\wedge...\wedge \uu_{p-1})\lrcorner \bmu)|\, d \bu_pd\H^{\sfrac{(p-1)(2q-p)}{2}}(\uu_1,...,\uu_{p-1}).
\end{align}
Setting $\cX=\cZ\cap\{\bu_1,\dots\bu_{p-1}\}^\perp$ and $\blambda=(\uu_1\wedge...\wedge \uu_{p-1})\lrcorner \bmu$, the inner integral on the right-hand side of \eqref{outside} can be written as
\beqn
\int_{\cU(\cX)}|\uu_p\lrcorner \blambda|\, d \bu_p.
\eeqn
To evaluate this integral, we use Lemma~\ref{subspint} with $\cY=[\blambda]$.  Making use of \eqref{kvectorid}$_1$, setting $d=\text{dim}(\cX)$ and $\ell=\text{dim}(\cX\cap[\blambda])$, and using \eqref{betatrig} and \eqref{areaball} we obtain
\begin{align}
\int_{\cU(\cX)}|\uu_p\lrcorner \blambda|\, d \bu_p&=\int_{\cU(\cX\cap[\blambda])}\int_{\cU(\cX\cap[\blambda]^\perp)}\int_0^{\pi/2}|\blambda|\cos\theta\cos^{\ell-1}\theta\sin^{d-\ell-1}\theta d\theta d\by' d\by\\
&=\omega_{\ell-1}\omega_{d-\ell-1}|\blambda|\sfrac{1}{2}B(\sfrac{d-\ell}{2},\sfrac{\ell+1}{2})\\
&=\frac{2\pi^{\sfrac{d}{2}}\Gamma(\sfrac{\ell+1}{2})}{\Gamma(\sfrac{\ell}{2})\Gamma(\sfrac{d+1}{2})}|\blambda|.\label{intintegrall}
\end{align}
Since $\cX$ and $\blambda$ depend on $(\bu_1,\dots\bu_{p-1})$, $d$ and $\ell$ potentially do as well.  One can see that $d=q-p+1$.  To analyze $\ell$, first notice that $[\blambda]\subseteq\cX$, so $\ell=\text{dim}([\blambda])$.  Also, for almost every $(\bu_1,\dots\bu_{p})\in\cU^{p}_\perp$, the subspaces $[\bu_1\wedge\dots\wedge\bu_p]$ and $[\bmu]$ are not orthogonal and, hence, $\blambda=(\uu_1\wedge...\wedge \uu_{p-1})\lrcorner \bmu$ is a nonzero $(k-p+1)$-vector.  Thus, generally, $\ell=k-p+1$.  Using these values for $d$ and $\ell$, we can substitute \eqref{intintegrall} into \eqref{outside} to obtain the desired result.
\end{proof}

\begin{lemma}\label{vfbound} Given a simple, unit $k$-vector $\bnu\in\hat\Lambda^k_u(\Real^n)$, we have for $0\leq k<n$
\beqn\label{lemma4.1eqn}
\int_{\W}|\bnu\cdot \bom|d\H^m(\textbf{a},\bom) 
=\frac{2^{\sfrac{k+4}{2}} \pi^{\frac{(n+2)(k+1)}{2}} \Gamma(\frac{n-k+1}2) }{\sqrt{\pi} \Gamma(\frac{n+1}2)}\prod_{i=1}^{k+1}\frac{\Gamma (\frac{i}2)}{\pi^i\Gamma (\frac{n-i+1}2)},
\eeqn
where $m=\text{\rm dim}\,\cW=n-1+k(n-k-1)$.
\end{lemma}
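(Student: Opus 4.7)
The plan is to reduce the integral over $\W$ to an integral over the Stiefel manifold $\cU^k_\perp(\R^n)$ and then iteratively apply Lemma~\ref{itlemma}. First, $\W$ fibers over $\hat\Lambda^k_u(\R^n)$ by projection onto $\bomega$, with fiber $\cU([\bomega]^\perp)$ of surface area $\omega_{n-k-1}$; since the integrand depends only on $\bomega$, Fubini reduces the problem to computing $\omega_{n-k-1}\int_{\hat\Lambda^k_u(\R^n)}|\bnu\cdot\bomega|\,d\cH^{k(n-k)}(\bomega)$.

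Next, I would lift the Grassmannian integral to the Stiefel manifold via the map $\Phi(\bu_1,\ldots,\bu_k)=\bu_1\wedge\cdots\wedge\bu_k$, which is a Riemannian submersion with fibers isometric to $\SO(k)\subset\R^{k^2}$. The coarea formula then gives
$$\int_{\hat\Lambda^k_u(\R^n)}|\bnu\cdot\bomega|\,d\cH^{k(n-k)}=\frac{1}{\cH^{k(k-1)/2}(\SO(k))}\int_{\cU^k_\perp(\R^n)}|(\bu_1\wedge\cdots\wedge\bu_k)\cdot\bnu|\,d\cH^{k(2n-k-1)/2},$$
where the $\SO(k)$-measure is computed explicitly in \eqref{SOnmeas}.

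Third, I would apply Lemma~\ref{itlemma} with $\cZ=\R^n$ (so $q=n$) and $\bmu=\bnu$, iterating from $p=k$ down to $p=0$. Each application strips one orthonormal vector and contributes an explicit constant, and the trivial base case $\int_{\cU^0_\perp(\R^n)}|\bnu|\,d\cH^0=|\bnu|=1$ closes the recursion to yield
$$\int_{\cU^k_\perp(\R^n)}|(\bu_1\wedge\cdots\wedge\bu_k)\cdot\bnu|\,d\cH^{k(2n-k-1)/2}=\prod_{p=1}^{k}\frac{2^{(p+1)/2}\pi^{(n-p+1)/2}\Gamma((k-p+2)/2)}{\Gamma((k-p+1)/2)\Gamma((n-p+2)/2)}.$$

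The final step is to assemble the three pieces---$\omega_{n-k-1}$, the inverse $\SO(k)$-measure from \eqref{SOnmeas}, and the product above---and simplify into the stated form, reindexing by $i=k-p+1$ to convert the product over $p\in\{1,\ldots,k\}$ together with the gamma factors coming from $\SO(k)$ into the single product over $i\in\{1,\ldots,k+1\}$. I expect this algebraic consolidation to be the main obstacle: powers of $2$ and $\pi$ enter from four distinct sources (the surface area $\omega_{n-k-1}$, the iteration product, and the two equivalent forms of $\cH^{k(k-1)/2}(\SO(k))$ given in \eqref{SOnmeas}), and matching the claimed prefactor requires careful use of $\Gamma(1/2)=\sqrt\pi$ and $\Gamma(x+1)=x\Gamma(x)$, as well as keeping track of the Jacobian factors that appear because $\SO(k)$ is measured as a submanifold of $\R^{k^2}$ rather than with its bi-invariant metric.
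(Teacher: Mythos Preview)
Your approach is valid and in fact somewhat cleaner than the paper's, but it differs from it and contains one concrete error.

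\textbf{Comparison.} The paper fibers $\W$ the other way, via $(\ba,\bomega)\mapsto\ba$, and then works inside each hyperplane $[\ba]^\perp$: it relates the integral over $\hat\Lambda^k_u([\ba]^\perp)$ to one over $\cU^k_\perp([\ba]^\perp)$ and applies Lemma~\ref{itlemma} with $\cZ=[\ba]^\perp$ (so $q=n-1$). The iteration then terminates at $|\bP_\ba^\perp\bnu|$ rather than $|\bnu|=1$, leaving a residual integral $\int_{\cU(\R^n)}|\bP_\ba^\perp\bnu|\,d\ba$ that is computed separately via Lemma~\ref{subspint}. Your route---fibering over $\bomega$ and applying Lemma~\ref{itlemma} directly in $\cZ=\R^n$ with $q=n$---collapses to the base case $|\bnu|=1$ and avoids that extra step.

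\textbf{The error.} Your first reduction is not ``Fubini'': the projection $\W\to\hat\Lambda^k_u(\R^n)$, $(\ba,\bomega)\mapsto\bomega$, is \emph{not} a Riemannian submersion when $\W$ carries the metric inherited from $\R^n\times\R^{\binom{n}{k}}$. Using the orthonormal basis for $T_{(\ba,\bomega)}\W$ given in the paper's proof, the kernel of this projection is spanned by the $(\be_i,\mathbf{0})$, while the horizontal complement contains the $k$ vectors $\tfrac{1}{\sqrt{2}}(\bu_j,-\ba\wedge(\bu_j\lrcorner\bomega))$, each mapped to a tangent vector of length $1/\sqrt{2}$ in $T_\bomega\hat\Lambda^k_u(\R^n)$. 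The coarea Jacobian is therefore $2^{-k/2}$, and the correct reduction reads
\[
\int_\W|\bnu\cdot\bomega|\,d\cH^m \;=\; 2^{k/2}\,\omega_{n-k-1}\int_{\hat\Lambda^k_u(\R^n)}|\bnu\cdot\bomega|\,d\cH^{k(n-k)}(\bomega).
\]
(This is exactly the same $2^{k/2}$ the paper picks up when fibering over $\ba$.) Without it your final constant is off by $2^{k/2}$; you can see this already in the test case $n=2$, $k=1$, where the integral equals $8\sqrt{2}$, not $8$. With that single correction, your outline goes through and the remaining consolidation is indeed purely bookkeeping with $\Gamma$-identities and \eqref{SOnmeas}.
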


\begin{proof}First we handle the $k=0$ case.  As stated in Remark~\ref{keq0}, by definition $\hat\Lambda^0_u(\Real^n)=\{-1,1\}$ and $[\pm1]^\perp=\Real^n$.  It follows that
\beqn
\cW=\{(\ba,\bomega) : \ba\in\cU(\Real^n), \bomega=\{-1,1\}\}.
\eeqn
and, thus,
\beqn
\int_\cW |\bnu\cdot\bomega|d\cH^m(\ba,\bomega)=2\int_{\cU(\Real^n)}d\cH^{n-1}=2\omega_{n-1}.
\eeqn
This is \eqref{lemma4.1eqn} when $k=0$.

To establish the formula for $1\leq k<n$ we will use repeated applications of the coarea formula. First we convert the integral over $\W$ into an iterated integral over $\ba\in\cU(\R^{n})$ and $\bomega\in\hat{\Lambda}^k_u([\aa]^\perp)$. We then use the fact that every element of $\hat{\Lambda}^k_u([\ba]^\perp)$ is the wedge product of $k$ orthogonal vectors in $\cU([\ba]^\perp)$ to relate the integral over $\hat{\Lambda}^k_u([\ba]^\perp)$ to an integral over $\mathcal{U}^k_\perp([\ba]^\perp)$.  This integral can then be computed with repeated applications of Lemma~\ref{itlemma}.  Finally, we compute the integral over $\ba\in\cU(\R^n)$ using Lemma~\ref{subspint}.\\

\noindent\textit{Step 1}: We will convert the integral over  $\W$ into an iterated integral.  Let $F:\W\rightarrow \cU(\R^{n})$ be defined by $F(\mathbf{a},\bom)=\mathbf{a}$ for all $(\ba,\bom)\in\cW$.  Fix some $(\ba,\bom)\in\cW$ and notice that since $\bom\in \hat{\Lambda}_u^k(\R^n)$, we can write $\bom=\uu_1\wedge...\wedge \uu_k$, where $\{\uu_1,...,\uu_k\}$ is an orthonormal basis for $[\bomega]$. It follows that, because $\textbf{a}\lrcorner \bom=\mathbf{0}$, we can extend the set $\{\aa, \uu_1,...,\uu_k\}$ into an orthonormal basis for $\R^n$ by adding $n-k-1$ vectors: $\{\aa, \uu_1,...,\uu_k, \ee_1,...,\ee_{n-k-1}\}.$
Using this basis we can construct an orthonormal basis for $T_{(\aa,\bom)}\W$ which, using the notation in \eqref{replacewedgeID}, can be written as the union of the sets
\begin{enumerate}
\item $\{(\ee_i,\mathbf{0}):1\le i\le n-k-1\}$,
\item $\{(\mathbf{0},\ee_i\wedge(\uu_j\lrcorner\bom)):1\le i\le n-k-1, 1\le j\le k\}$,
\item $\{\frac{1}{\sqrt{2}}(\uu_j,-\aa\wedge(\uu_j\lrcorner\bom)):1\le j\le k\}$.
\end{enumerate}
Notice that
\begin{enumerate}
\item $\nabla F(\ba,\bomega)(\ee_i,\mathbf{0})=\ee_i$,
\item $\nabla F(\ba,\bomega)(\mathbf{0},\ee_i\wedge(\uu_j\lrcorner\bom))=\mathbf{0}$,
\item $\nabla F(\ba,\bomega)\big(\frac{1}{\sqrt{2}}(\uu_j,-\aa\wedge(\uu_j\lrcorner\bom))\big)=\frac{1}{\sqrt{2}}\uu_j$.
\end{enumerate}
Since an orthonormal basis for $T_\aa \mathcal{U}(\R^n)$ is $\{\uu_1,...,\uu_k,\be_1,..,\be_{n-k-1}\}$, we can compute the matrix $[\nabla F]$ of $\nabla F$ relative to these bases, ordered as in the lists above, as
\[
[\nabla F]= \begin{blockarray}{cccc}
n-k-1& k(n-k-1) &k&\\
\begin{block}{(ccc)c}
\bzero&\bzero&\frac{1}{\sqrt{2}}\textbf{1}_k &k\\
\textbf{1}_{n-k-1}&\bzero&\bzero&n-k-1\\
  \end{block}
\end{blockarray}.
\]
Thus,
\[
[\nabla F][\nabla F]^T=
\begin{pmatrix} 
\frac{1}2\textbf{1}_k &\bzero\\
\bzero&\textbf{1}_{n-k-1}\\
\end{pmatrix}
\quad \mbox{and so}\quad
\det(\nabla F\nabla F^T)=\frac{1}{2^k}.
\]

Recall that $\bP_\aa$ is the projection onto $[\aa]$ and $\bP_\aa^\perp$ is the projection onto $[\aa]^{\perp}$. Notice that $\bnu\cdot \bom=\bP_\aa^\perp \bnu\cdot \bom$, since $\aa\lrcorner \bom=\textbf0.$
Using $F$ with the coarea formula, we have 
\begin{equation}\label{firstbreak}
\int_{\W}|\bnu\cdot \bom|d\H^m(\textbf{a},\bom) =2^{\frac{k}{2}}\int_{\cU(\R^n)}\int_{\hat{\Lambda}^k_u([\ba]^\perp)}|\bP_{\aa}^\perp\bnu\cdot\bom|d \H^{k(n-k-1)}(\bom)d\H^{n-1}(\aa).
\end{equation}

\noindent\textit{Step 2}:
We utilize the fact that every element of $\hat{\Lambda}^k_u([\aa]^\perp)$ is the wedge product of $k$ orthogonal vectors in $\cU([\aa]^\perp)$ to relate the integral over $\hat{\Lambda}^k_u([\aa]^\perp)$ to an integral over $\mathcal{U}^k_\perp([\aa]^\perp)$.
 Define $G: \mathcal{U}^k_\perp([\aa]^\perp)\rightarrow \hat{\Lambda}_u^k([\aa]^\perp)$ by 
\[
G(\uu_1,...,\uu_k)=\uu_1\wedge...\wedge \uu_k,\qquad (\bu_1,\dots\bu_k)\in\cU^k_\perp([\ba]^\perp).
\] 
For $\bom\in \hat{\Lambda}^k_u([\ba]^\perp)$, fix $(\bu_1,\dots,\bu_k)\in\cU_\perp^k([\aa]^\perp)$ such that $\bomega=\bu_1\wedge\dots\wedge\bu_k$. Extend the set $\{\uu_1,...,\uu_k\}$ to an orthonormal basis for $[\aa]^\perp$ as $\{\uu_1,...,\uu_k,\ee_1,...,\ee_{n-k-1}\}.$ Then an orthonormal basis for $T_{(\uu_1,...,\uu_k)}\mathcal{U}^k_{\perp}([\aa]^\perp)$ is given just as in Lemma~\ref{itlemma} with $p=k$ and $q=n-1$.
One can calculate
\beqn
\nabla G(\uu_1,...,\uu_k) \bE_{ij}=\ee_j\wedge(\uu_i\lrcorner\bom)\quad \text{and}\quad \nabla G(\uu_1,...,\uu_k) \bF_{ij}=\textbf{0}.
\eeqn
Indeed, the latter term is zero because adjusting $(\uu_1,...,\uu_k)$ in the $\bF_{ij}$ direction is a rotation, so the wedge product of the adjusted vector will remain unchanged. 

Moreover, the set $\{\ee_j\wedge(\uu_i\lrcorner\bom):1\le i\le k, 1\le j\le n-k-1\}$ consists of $k(n-k-1)$ many elements of $T_{\bom} \hat{\Lambda}_u^k([\aa]^\perp)$, so it forms an orthonormal basis for the set.  The matrix $[\nabla G]$ of $\nabla G$ relative to these bases is
\[
[\nabla G]= 
\begin{blockarray}{cc}
k(n-k-1)& k(k-1)/2\\
\begin{block}{(cc)}
\textbf{1}_{k(n-k-1)} & \textbf{0}\\
\end{block}
\end{blockarray}
\qquad \text{and so}\qquad \det(\nabla G\nabla G^T)=1.
\]

The preimage $G^{-1}(\bom)$ is the set of elements in $\mathcal{U}_\perp^k([\ba]^\perp)$ related to our fixed $(\bu_1,\dots,\bu_k)$ by a rotation. So $\H^{\sfrac{k(k-1)}{2}}(G^{-1}(\bom))=\H^{\sfrac{k(k-1)}{2}}(\SO(k))$.  Thus, by the coarea formula, the inner integral on the right-hand side of \eqref{firstbreak} becomes
\begin{multline}
\int_{\hat{\Lambda}^k_u([\ba]^\perp)}|\bP_{\aa}^\perp\bnu\cdot\bom|\, d \H^{k(n-k-1)}(\bom)\\
\label{secondbreak}=\frac{1}{\H^{\sfrac{k(k-1)}{2}}(\SO(k))}\int_{\mathcal{U}_\perp^k([\ba]^\perp)} |\bP_{\aa}^\perp\bnu\cdot \uu_1\wedge...\wedge \uu_k| \, d\H^{\sfrac{k(2n-k-3)}{2}}(\uu_1,...,\uu_k).
\end{multline}

\noindent\textit{Step 3}: We simplify the right-hand side of \eqref{secondbreak} by calculating
 \[
\int_{\mathcal{U}_\perp^k([\ba]^\perp)} |\bP_{\aa}^\perp\bnu\cdot \uu_1\wedge...\wedge \uu_k| \, d\H^{\sfrac{k(2n-k-3)}{2}}(\uu_1,...,\uu_k)
\]
using Lemma~\ref{itlemma}.  To do so, first notice that
\beqn
\bP_{\aa}^\perp\bnu\cdot \uu_1\wedge...\wedge \uu_k=(\bu_1\wedge\dots\wedge\bu_k)\lrcorner\bP^\perp_\ba\bnu.
\eeqn
Thus, we can use Lemma~\ref{itlemma} with $\cZ=[\ba]^\perp$, $\bmu=\bP^\perp_\ba\bnu$, and $p=k$ to find that
\begin{multline}
\int_{\mathcal{U}_\perp^k([\ba]^\perp)} |\bP_{\aa}^\perp\bnu\cdot \uu_1\wedge...\wedge \uu_k| \, d\H^{\sfrac{k(2n-k-3)}{2}}(\uu_1,...,\uu_k)\\
=C(n,k,k) \int_{\mathcal{U}_\perp^{k-1}([\ba]^\perp)} |(\bu_1\wedge\dots\wedge\bu_{k-1})\lrcorner\bP^\perp_\ba\bnu| \, d\H^{\sfrac{(k-1)(2n-k-2)}{2}}(\uu_1,...,\uu_{k-1}),
\end{multline}
where
\beqn
C(n,k,i)=\frac{2^{\sfrac{i+1}{2}}\pi^{\sfrac{n-i}{2}}\Gamma(\sfrac{k-i+2}{2})}{\Gamma(\sfrac{k-i+1}{2})\Gamma(\sfrac{n-i+1}{2})}.
\eeqn
We can keep applying Lemma~\ref{itlemma} to obtain
\beqn\label{almostsecond}
\int_{\mathcal{U}_\perp^k([\ba]^\perp)} |\bP_{\aa}^\perp\bnu\cdot \uu_1\wedge...\wedge \uu_k| \, d\H^{\sfrac{k(2n-k-3)}{2}}(\uu_1,...,\uu_k)=\prod_{i=1}^kC(n,k,i) |\bP_{\aa}^\perp\bnu|.
\eeqn
A calculation using \eqref{gammahalf} shows that
\beqn\label{defK}
K\coloneqq\prod_{i=1}^kC(n,k,i)=\frac{2^{\sfrac{k(k+3)}4}\pi^{\sfrac{k(2n-k-1)-2}4}\Gamma(\sfrac{k+1}2)}{\prod_{i=1}^k\Gamma(\frac{n-i+1}2)}.
\eeqn
Putting together \eqref{firstbreak}, \eqref{secondbreak}, and \eqref{almostsecond}, we see that 
\begin{equation}\label{thirdbreak}
\int_{\W}|\bnu\cdot \bom|d\H^m(\textbf{a},\bom) =\frac{2^{\frac{k}{2}}K}{\cH^{\sfrac{k(k-1)}{2}}(\SO(k))}\int_{\cU(\R^n)}  |\bP_\aa^\perp\bnu| d\H^{n-1}(\aa).
\end{equation}

\noindent\textit{Step Four}: We complete the integral calculation by decomposing $\cU(\R^n)$ into $\cU([\bnu])$ and $\cU([\bnu]^\perp)$ via Lemma \ref{subspint}. By \eqref{kvectorid}$_2$ and Lemma \ref{subspint}, we have:
\begin{align}
\int_{\cU(\R^n)}  |\bP_\aa^\perp\bnu| d\H^{n-1}(\aa)&=\int_{\cU([\bnu])}\int_{\cU([\bnu]^{\perp})}\int_0^{\pi/2}\sin\theta \cos^{k-1}\theta\sin^{n-k-1}\theta \, d\theta d\H^{n-k-1}d\H^{k-1}\nonumber\\
&=\omega_{k-1}\omega_{n-k-1}\sfrac{1}{2}B\left(\sfrac{n-k+1}{2},\sfrac{k}{2}\right)\nonumber\\
&=\frac{2\pi^{\frac{n}{2}}\Gamma(\frac{n-k+1}2)}{\Gamma(\frac{n+1}{2})\Gamma(\frac{n-k}{2})}.\label{lastsplit}
\end{align}
Inserting \eqref{lastsplit} and \eqref{defK} into \eqref{thirdbreak}, we see that
\beqn
\int_{\W}|\bnu\cdot \bom|d\H^m(\textbf{a},\bom) = \frac{2^{\sfrac{(k+4)(k+1)}{4}}\pi^{\sfrac{2kn-k^2+2n-k-2}{4}}\Gamma(\sfrac{k+1}{2})\Gamma(\frac{n-k+1}2)}{\Gamma(\sfrac{n+1}{2})\Gamma(\sfrac{n-k}{2})\cH^{\sfrac{k(k-1)}{2}}(\SO(k))\prod_{i=1}^k\Gamma(\frac{n-i+1}2)}.
\eeqn
Now substiuting in \eqref{SOnmeas} yields the desired result. 
\end{proof}

\begin{theorem}\label{thmlimsmeas}
If $\Om$ is an open, bounded set such that $\M\subseteq \Om$, then 
\begin{equation}
\lim_{\s\uparrow 1} (1-\s)\text{\rm Meas}_{\s}^k(\M,\Om)=\frac{4 \pi^{\frac{(n+2)(k+1)}{2}} \Gamma(\frac{n-k+1}2) }{\sqrt{\pi} \Gamma(\frac{n+1}2)(n-k)}\prod_{i=1}^{k+1}\frac{\Gamma (\frac{i}2)}{\pi^i\Gamma (\frac{n-i+1}2)}\H^k(\M).
\end{equation}

\end{theorem}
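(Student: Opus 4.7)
My plan builds directly on the finiteness argument given above, pushing the bounds to an exact limit. The change of variables from Lemma~\ref{COV} parameterizes each disk by $\Xi(z,\ba,\bom,\xi,r) = (z+\xi\ba,\bom,r)$ with Jacobian $2^{-k/2}\xi^{n-k-1}|\textbf{vol}_\M(z)\cdot\bom|$. Since every disk in $\D_\Om(\M)$ has an odd, hence positive, intersection count $N(p,\bom,r)$ with $\M$, dropping the factor $N\ge 1$ in the area formula and enlarging the integration domain via \eqref{oddballs} recovers the upper bound
\[
\text{Meas}_\sigma^k(\M,\Om) \le \int_\M \int_\W \int_0^\infty\!\int_\xi^{\xi+\diam(\Om)} r^{k-n-\sigma}\frac{\xi^{n-k-1}}{2^{k/2}}|\textbf{vol}_\M(z)\cdot\bom|\, dr\, d\xi\, d\H^m(\ba,\bom)\, dz
\]
already derived in the finiteness calculation. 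The theorem then reduces to (i) identifying the exact limit of the $r,\xi$ integral, and (ii) producing a matching lower bound.

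For (i), closed-form evaluation of the inner $r$-integral yields $\frac{1}{n+\sigma-k-1}\int_0^\infty [\xi^{-\sigma} - \xi^{n-k-1}(\xi+\diam(\Om))^{1+k-n-\sigma}]\, d\xi$. Splitting at a fixed $\epsilon>0$, the tail on $(\epsilon,\infty)$ is uniformly bounded as in \eqref{xibound} and vanishes once multiplied by $(1-\sigma)$, while the only singular piece comes from $\int_0^\epsilon \xi^{-\sigma}d\xi = \epsilon^{1-\sigma}/(1-\sigma)$, contributing $\epsilon^{1-\sigma}\to 1$. Combined with the prefactor $(n+\sigma-k-1)^{-1}\to (n-k)^{-1}$, this gives
\[
\lim_{\sigma\uparrow 1}(1-\sigma)\int_0^\infty\!\int_\xi^{\xi+\diam(\Om)} r^{k-n-\sigma}\xi^{n-k-1}\, dr\, d\xi \,=\, \frac{1}{n-k}.
\]
Applying Lemma~\ref{vfbound} pointwise with $\bnu = \textbf{vol}_\M(z)$ produces a $z$-independent constant, and integrating in $z$ against $\H^k|_\M$ contributes a factor of $\H^k(\M)$. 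A direct simplification checks that $\frac{1}{2^{k/2}(n-k)}$ times the constant of Lemma~\ref{vfbound} equals the prefactor stated in the theorem.

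For (ii), the difficulty is that $\Xi$ is not injective: each disk meeting $\M$ in $N$ points has $N$ preimages, so the upper bound above overcounts wherever $N>1$. Fix small $\delta,r_0>0$ with $r_0<\frac12\dist(\overline\M,\partial\Om)$ and restrict to
\[
A_0 := \bigl\{(z,\ba,\bom,\xi,r)\in\M\times\W\times\R^+\times\R^+ : |\textbf{vol}_\M(z)\cdot\bom|\ge\delta,\ \xi<r<r_0\bigr\}.
\]
Compactness and smoothness of $\overline\M$ provide a positive lower bound on its reach, which combined with the quantitative transversality $|\textbf{vol}_\M(z)\cdot\bom|\ge\delta$ and the implicit function theorem guarantees that for $r_0=r_0(\delta)$ small enough every image disk satisfies $D(p,\bom,r)\cap\M=\{z\}$; moreover the whole disk lies inside $\Om$ by the choice of $r_0$. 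Hence $\Xi|_{A_0}$ is injective with image in $\D_\Om(\M)$, and the area formula becomes an equality, producing a lower bound on $\text{Meas}_\sigma^k(\M,\Om)$. The same $r,\xi$ limit calculation (with truncated upper ranges that do not affect the $\xi\to 0^+$ asymptotics) again produces $1/(n-k)$, and sending $\delta\downarrow 0$ restores the full $\W$-integral by monotone convergence, since the tangency locus $\{\bom:\textbf{vol}_\M(z)\cdot\bom=0\}$ is $\H^m$-null. The principal technical obstacle is this quantitative transversality step: bounding $r_0$ below in terms of $\delta$ and the reach of $\overline\M$ so that the implicit function theorem delivers a single intersection.
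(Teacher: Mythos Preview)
Your proof is correct and uses the same core ingredients as the paper—the change of variables from Lemma~\ref{COV}, the single--intersection transversality argument, and Lemma~\ref{vfbound} for the constant—but organizes them a bit differently. The paper does not separate the argument into an upper and a lower bound. Instead it introduces a $\sigma$-dependent radius cutoff $\epsilon=(1-\sigma)^{1/n}$, shows that the contribution of disks with $r>\epsilon$ vanishes in the limit, and on $\{r\le\epsilon\}$ makes $\Xi$ injective by means of a choice function $f$ selecting one intersection point per disk; this turns the area formula into an exact identity \eqref{lim1}. The single--intersection property is then used pointwise: for each non-tangential $(z,\ba,\bom)$ there exists $\epsilon_0(z,\ba,\bom)$ below which the disk meets $\cM$ only at $z$, and since $\epsilon\to 0$ as $\sigma\uparrow 1$, this eventually holds and dominated convergence finishes. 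Your upper/lower split with fixed $(\delta,r_0)$ and a final $\delta\downarrow 0$ is a valid alternative; it trades the $\sigma$-dependent cutoff for an explicit quantitative restriction. Your flagged obstacle—obtaining $r_0$ uniform over $\{|\textbf{vol}_\cM(z)\cdot\bom|\ge\delta\}$—is not deeper than the paper's pointwise $\epsilon_0$: compactness of $\overline\cM\times\{(\ba,\bom)\in\cW:|\textbf{vol}_\cM(z)\cdot\bom|\ge\delta\}$ upgrades the pointwise bound to a uniform one. Both routes are equally rigorous; the paper's is slightly shorter because it avoids a separate lower bound and the $\delta$-limit.
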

\begin{proof} Set $\e\coloneqq(1-\s)^{1/n}$ and 
\[
\D_\e(\M)\coloneqq\{(p,\bom, r)\in \D(\M):r\le \e\}.
\]
Notice that, for sufficiently small $\e$, 
\begin{equation}
\D_\Om(\M)\setminus \D_\e(\M)\subseteq \Xi\Big( \bigcup_{\xi\in\R^+} \M\times \W\times \{\xi\}\times [\max\{\xi,\e\},\xi+\text{diam}(\Om)]\Big)
\end{equation}
by an argument similar to the one justifying \eqref{oddballs}. Thus, using the change of variables formula from Lemma \ref{COV}, we know there is a constant $C_n$ depending on $\M$ and $n$ such that 

\begin{align}
\nonumber\int_{\D_\Om(\M)\setminus\D_\e(\M)}r^{k-n-\s}&d\H^{2n}(p,\bom,r)\le C_n \int_0^\infty\int_{\max\{\xi,\e\}}^{\xi+\text{diam}(\Om)}  r^{k-n-\s}\xi^{n-k-1}dr d\xi\\
\nonumber=&\frac{C_n}{n+\s-k-1}\left[\int_0^\e \xi^{n-k-1}(\e^{k-n-\s+1}-(\xi+\text{diam}(\Om))^{k-n-\s+1} )d\xi\right.\\
\label{Donote}&\qquad+\left.  \int_\e^\infty \xi^{-\s}-\xi^{n-k-1}(\xi+\text{diam}(\Om))^{k-n-\s+1}d\xi  \right].
\end{align} 
Looking at the first integral on the right-hand side of \eqref{Donote} we find that
\[
\int_0^\e \xi^{n-k-1}(\e^{k-n-\s+1}-(\xi+\text{diam}(\Om))^{k-n-\s+1} )d\xi \le \frac{\e^{1-\s}}{n-k}\,,
\]
which goes to $1/(n-k)$ as $\s\uparrow 1.$ From \eqref{xibound}, we also see that the second integral in \eqref{Donote}, when multiplied by $1-\sigma$, goes to zero as $\s\uparrow 1.$ Hence 
\begin{equation}\label{lim0}
\lim_{\s\uparrow 1}(1-\s)\int_{\D_\Om(\M)\setminus\D_\e(\M)}r^{k-n-\s}d\H^{n+k(n-k)+1}(p,\bom,r)=0.
\end{equation}

For each $(p,\bom,r)\in\D_{\e}(\M)$ we know there is at least one $z\in\M$ so that $D(p,\bom,r)$ intersects $\M$ at $z$. Define $f(p,\bom,r)\in\M$ to be one such chosen point for each 
$(p,\bom,r)\in\D_{\e}(\M)$. For $z\in\M$ and $(\aa,\bom)\in\W$ set 
\[
R_\e(z,\aa,\bom)\coloneqq\{(\xi,r)\in \R^+\times\R^+:(z+\xi\aa,\bom,r)\in \cD_{\e}(\M)\text{ and }f(z+\xi\aa,\bom,r)=z\}.
\]
We have defined $f$ so that $\Xi$ is injective on
\[
\bigcup_{(z,\aa,\bom)\in \M\times \W}\{z\}\times\{\aa\}\times\{\bom\}\times R_\e(z,\aa,\bom).
\] 
To see why this is the case, consider two points $(z_1,\ba_1,\bomega_1,\xi_1,r_1)$ and $(z_2,\ba_2,\bomega_2,\xi_2,r_2)$ in this set such that
\beqn
\Xi(z_1,\ba_1,\bomega_1,\xi_1,r_1)=\Xi(z_2,\ba_2,\bomega_2,\xi_2,r_2).
\eeqn
It follows that
\beqn\label{samedisk}
(z_1+\xi_1\ba_1,\bomega_1,r_1)=(z_2+\xi_2\ba_2,\bomega_2,r_2).
\eeqn
From here we can conclude that $\bomega_1=\bomega_2$ and $r_1=r_2$.  It also follows that
\beqn
f(z_1+\xi_1\ba_1,\bomega_1,r_1)=f(z_2+\xi_2\ba_2,\bomega_1,r_2)=:z.
\eeqn
From the definition of $f$, this implies that $z=z_1=z_2$.  Thus, from \eqref{samedisk} we obtain $\xi_1\ba_1=\xi_2\ba_2$.  Since $\xi_1$ and $\xi_2$ are positive and $\ba_1$ and $\ba_2$ are unit vectors, this is only possible if $\xi_1=\xi_2$ and $\ba_1=\ba_2$.

Again applying the change of variables formula in Lemma~\ref{COV}, we get 
\begin{multline}\label{lim1}
\int_{\D_\e(\M)}r^{k-n-\s}\,d\H^{n+k(n-k)+1}(p,\bom,r)\\
=\int_\M\int_{\W}\int_{R_\e(z,\aa,\bom)} r^{k-n-\s} \frac{\xi^{n-k-1}}{2^{k/2}}|\vol_{\M}(z)\cdot \bomega|\, d\H^2(\xi,r)d\H^{n-1+k(n-k-1)}(\aa,\bom)dz.
\end{multline}
Since $\M$ is a smooth manifold, if $(z,\aa,\bom)$ is such that $\ba\not\in [\textbf{vol}_\cM(z)]$ and $\bomega\cdot\textbf{vol}_\cM(z)\not=\bzero$, then for any $\xi,r>0$ the disk $D(z+\xi\ba,\bomega,r)$ is not tangent to $\cM$ at $z$.  It follows that there is $\e_0$ such that, for $\e\le \e_0$ any disk $D(z+\xi\ba,\bomega,r)$ for $\xi\in[0,\e]$ and $r\in[\xi,\e]$ only intersects $\cM$ at $z$.  Thus, for such $\e$ we have
\[
R_\e(z,\aa,\bom)=\{(\xi,r)\in\R^+\times\R^+:\xi\in[0,\e], r\in[\xi,\e]\}.
\]
Then, integrating and using the fact that $\displaystyle\lim_{\s\uparrow 1} \e^{1-\s}=1$, we have
\begin{align*}
\lim_{\s\uparrow 1} (1-\s)\int_{R_\e(z,\aa,\bom)} r^{k-n-\s} \xi^{n-k-1}d\H^2(\xi,r) &=  \lim_{\s\uparrow 1} (1-\s)\int_{0}^\e\int_\xi^\e r^{k-n-\s} \xi^{n-k-1}dr d\xi\\
&=\frac{1}{n-k}.
\end{align*}

From Item~\ref{tan} in Lemma~\ref{lemmeasure}, we know that the measure of the set of disks that are tangent to $\cM$ is negligible.  Thus, we can combining the calculation in the previous equation with \eqref{lim0} and \eqref{lim1} we obtain
\[
\lim_{\s\uparrow 1} \text{Meas}_\s^k(\M,\Om)=\frac{2^{-k/2}}{n-k}  \int_\M\int_{\W} |\vol_{\M}\cdot \bomega|\,d\H^{n-1+k(n-k-1)}(\aa,\bom)dz.
\]
Combining this with Lemma \ref{vfbound}, we have our result. 
\end{proof}

We use the term ``fractional measure'' for $\text{Meas}_\sigma^k$ since it generalizes the concepts of fractional perimeter, fractional area, and fractional length appearing in the literature.  However, it is important to keep in mind that the $\sigma$-measure relative to a bounded set $\Omega$ is not a measure because it is not additive.  To see this, we will explicitly compute the $\sigma$-measure for a few sets in the $n=1$, $k=0$ case.  Setting $\Omega=(-2,2)$, we will show that
\beqn\label{nonmeas}
\text{Meas}^0_\sigma(\{-1,1\},\Omega)< \text{Meas}^0_\sigma(\{-1\},\Omega)+\text{Meas}^0_\sigma(\{1\},\Omega).
\eeqn
In the $n=1$, $k=0$ case, the disks are oriented open intervals.  In order for a disk to intersect $\{-1,1\}$ an odd number of times, it must contain either $-1$ or $1$, but not both.  If the center of an interval $p$ satisfies $p<0$, then the interval contains only $-1$ if its radius $r$ satisfies $|p+1|<r\leq |1-p|$, while if $p>0$, then the interval contains only $1$ if $|1-p|<r\leq |p+1|$.  Moreover, any such interval satisfying these conditions has an endpoint inside $\Omega$.  Thus, we have
\beqn\label{calc1}
\text{Meas}^0_\sigma(\{-1,1\},\Omega)=2\Big(\int_{-\infty}^0\int_{|p+1|}^{|1-p|}+\int^{\infty}_0\int^{|p+1|}_{|1-p|}\Big)r^{-1-\sigma}drdp=\frac{8}{\sigma(1-\sigma)}.
\eeqn
The factor of two in front of the integral comes from the fact that we have to do the calculation for intervals of both orientations.  Next, we compute $\text{Meas}^0_\sigma(\{1\},\Omega)$.  If $p>0$, an interval centered at $p$ of radius $r$ will contain $1$ and have a boundary point in $\Omega$ when $|1-p|<r<|p+2|$, while if $p<0$ the radius of such an interval must satisfy $|1-p|<r<|2-p|$.  Thus,
\beqn\label{calc2}
\text{Meas}^0_\sigma(\{1\},\Omega)=2\Big(\int_{-\infty}^0\int_{|1-p|}^{|2-p|}+\int^{\infty}_0\int_{|p-1|}^{|p+2|}\Big)r^{-1-\sigma}drdp=\frac{2^{3-\sigma}}{\sigma(1-\sigma)}.
\eeqn
From symmetry, it follows that
\beqn\label{calc3}
\text{Meas}^0_\sigma(\{-1\},\Omega)=\text{Meas}^0_\sigma(\{1\},\Omega).
\eeqn
Putting together \eqref{calc1}--\eqref{calc3} yields \eqref{nonmeas}.

\section{Future directions}\label{Sect5}

The fractional notion of measure introduced here opens the door to numerous avenues of research.  Below we outline a few of them.

One of the classical problems in geometric measure theory is Plateau's problem.  In fact, it drove the innovation in this field for many decades.  See Harrison and Pugh \cite{HP16} for an overview of the history of this problem.  In its most classical form, motivated by the study of soap films, the question is: given a space curve in three dimensions, is there a spanning surface\footnote{In this section we use terms like ``surface'' and ``manifold'' loosely.  This is because the objects with minimal area need not be manifolds in the strict sense, but can have singularities such as triple junctions.} with minimal area.  Generalized to $n$ dimensions, the problem can be phrased as given an $(n-2)$-dimensional manifold $\cN$ with empty boundary, is there an $(n-1)$-dimensional surface $\cM$ such that $\partial\cM=\cN$ with minimal $\cH^{n-1}$-measure.  The fractional version of this problem was studied by Caffarelli, Roquejoffre, and Savin \cite{CRS10} and has spawned a plethora of works on nonlocal minimal surfaces, as mentioned in the introduction.  One can generalize Plateau's problem even further by considering $\cN$ to have dimension $k-1$ and looking for a spanning manifold of dimension $k$ with minimal $\cH^{k}$-measure.  The corresponding fractional version would be phrased as follows.  Given a $k$-dimensional manifold $\cN$ with empty boundary and an open set $\Omega$ containing $\cN$, show there exists a minimizer of the functional
\beqn\label{PP}
J_\sigma(\cM)=\text{Meas}^k_\sigma(\cM,\Omega)
\eeqn
over all $\cM$ such that $\partial\cM=\cN$.  Besides establishing existence, the regularity of the minimizers could also be investigated.

Another classical problem is the isoparametric inequality.  In its most basic form it states that if a plane curve of length $L$ encloses a region of area $A$, then
\beqn
4\pi A\leq L^2,
\eeqn
with equality holding only in the case when the curve is a circle.  In $n$ dimensions the corresponding inequality involving sets $E$ of finite $\cH^n$-measure is
\beqn
n^n\alpha_n\cH^n(E)^{n-1}\leq \cH^{n-1}(\partial E)^n,
\eeqn
with equality holding if and only if $E$ is a ball.  A version of this result involving the fractional perimeter was established by Frank and Seiringer \cite{FS08}.  Similar to Plateau's problem, the isoparametric inequality one can be generalized to arbitrary codimension.  As established by Almgren \cite{A86}, for each $k\leq n$, there is a $k$-dimensional manifold $\cM\subseteq\Real^n$ with boundary such that
\beqn
k^k\alpha_k\cH^k(\cM)^{k-1}\leq \cH^{k-1}(\partial\cM)^k
\eeqn
and equality holds if and only if $\cM$ is a $k$-dimensional disk.  Whether this results holds, perhaps in a modified form, when using the fractional notion of measure introduced here could be investigated.  

Much of the analysis of Plateau's problem and the isoparametric inequality involves using objects more general than a manifold, such as flat chains, integral currents, and varifolds.  Thus, the study of the fractional version of these statements will probably require extending the definition of $\text{Meas}_\sigma^k$ to such objects.  This is a topic worth investigating in its own right.

Finally, the fractional measure should be able to motivate a nonlocal mean-curvature vector.  As mentioned in the introduction, $\sigma$-minimal surfaces must satisfy a pointwise condition on their boundary which Abatangelo and Valdinoci \cite{AV14} used to define nonlocal notions of mean and directional curvature for surfaces.  Moreover, the nonlocal notion of directional curvature motivates a nonlocal curvature tensor, or second fundamental form, for surfaces, as shown by Paroni, Podio-Guidugli, and Seguin \cite{PPGS23}.  Upon introducing a fractional notion of length, Seguin \cite{S20,S20c} found that curves that minimize this length while keeping the boundary points fixed must satisfy a pointwise condition which he used to define a nonlocal notion of curvature for a curve.  In a similar way, it is likely the case that any minimizer of \eqref{PP} would satisfy a pointwise condition that could be used to define a nonlocal mean-curvature vector for manifolds.

\appendix

\section{Appendix}

Let $\mathcal{M}$ be a bounded $k$-dimensional manifold in $\mathbb{R}^n$ whose boundary is a $(k-1)$-dimensional manifold.  Recall the definition of $\cW$ in \eqref{Wdef} and that $\textbf{vol}_\cM$ is a volume form for $\overline\cM$.  To describe the disks that intersect $\overline{\M}$ we consider the function
\[
\Xi:\overline{\M}\times\W\times \R^+_0\times \R^+\rightarrow \R^n \times \hat{\Lambda}^k_u(\R^n)\times\R
\]
defined by
\begin{equation}\label{xi}
\Xi(z,\textbf{a}, \bom, \xi, r)\coloneqq(z+\xi \textbf{a},\bom, r),\qquad (z,\textbf{a},\bom,\xi,r)\in \overline\M\times\W\times \R^+_0\times\R^+.
\end{equation}
Notice that $\Xi(z,\textbf{a}, \bom, \xi, r)$ is the disk with center $z+\xi\mathbf{a}$, those normal $k$-vector $\bomega$ is orthogonal to $\ba$, and has radius $r$.

\begin{lemma}\label{COV}
If $\A$ is a subset of $\M\times\W\times \R^+\times \R^+$ and $f:\Xi(\A)\rightarrow \R$ is an integrable function, then 
\begin{multline}\label{COVlemma}
\int_{\Xi(\A)}\Big[\sum_{(z,\mathbf{a},\bom,\xi,r)\in\Xi^{-1}(p,\bom,r)}f(p,\bom, r)\Big] \, d\H^{n+k(n-k)+1}(p,\bom, r)\\ 
=\int_{\A} f(z+\xi \mathbf{a}, \bom, r) \frac{\xi^{n-k-1}}{2^{k/2}}|\vol_{\M}(z)\cdot \bomega|\,d\H^{n+k(n-k)+1}(z,\mathbf{a}, \bom, \xi, r).
\end{multline}
 
\end{lemma}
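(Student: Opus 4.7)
The plan is to apply the area formula for Lipschitz maps to $\Xi$. Since both the domain $\overline\M\times\W\times\R^+_0\times\R^+$ and codomain $\R^n\times\hat\Lambda^k_u(\R^n)\times\R$ have Hausdorff dimension $n+k(n-k)+1$, this reduces \eqref{COVlemma} to the pointwise identity
\[
|J\Xi|(z,\ba,\bom,\xi,r)=\frac{\xi^{n-k-1}}{2^{k/2}}\,|\vol_\M(z)\cdot\bom|
\]
for the Jacobian of $\Xi$ with respect to the induced metrics on domain and codomain.

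To compute $|J\Xi|$, I would fix $(z,\ba,\bom,\xi,r)$ and work in coordinated orthonormal bases. Choose $\{\tt_1,\dots,\tt_k\}\subset T_z\overline\M$ with $\vol_\M(z)=\tt_1\wedge\cdots\wedge\tt_k$, write $\bom=\uu_1\wedge\cdots\wedge\uu_k$ with $\{\uu_j\}$ orthonormal, and extend $\{\ba,\uu_1,\dots,\uu_k\}$ to an orthonormal basis $\{\ba,\uu_1,\dots,\uu_k,\ee_1,\dots,\ee_{n-k-1}\}$ of $\R^n$. Exactly as in the proof of Lemma~\ref{vfbound}, an orthonormal basis of $T_{(\ba,\bom)}\W$ is furnished by the vectors $(\ee_i,\bzero)$, $(\bzero,\ee_i\wedge(\uu_j\lrcorner\bom))$ and $\tfrac{1}{\sqrt2}(\uu_j,-\ba\wedge(\uu_j\lrcorner\bom))$, while \eqref{replacewedgeID} shows that $\{\bw\wedge(\uu_j\lrcorner\bom):\bw\in\{\ba,\ee_1,\dots,\ee_{n-k-1}\},\ 1\le j\le k\}$ is an orthonormal basis for $T_\bom\hat\Lambda^k_u(\R^n)$. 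Together with the $\partial_\xi$, $\partial_r$ directions, these furnish orthonormal frames for the full domain and codomain.

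Differentiating $\Xi(z,\ba,\bom,\xi,r)=(z+\xi\ba,\bom,r)$ on these bases is immediate: $\tt_i$-directions map to $(\tt_i,\bzero,0)$; the $\W$-directions pass unchanged into the $\hat\Lambda^k_u$-slot while their $\R^n$-component is multiplied by $\xi$; the $\xi$-direction maps to $(\ba,\bzero,0)$; and $\partial_r$ maps to itself. Ordering the bases appropriately, the Jacobian matrix is block triangular. Successive Laplace expansions---along the $\partial_r$-column, the $k(n-k-1)$ columns coming from $(\bzero,\ee_i\wedge(\uu_j\lrcorner\bom))$, the $\xi$-column (whose only remaining nonzero entry lies in the $\ba$-row), and the $(\ee_i,\bzero)$-columns forming a diagonal block of size $n-k-1$ scaled by $\xi$---collapse the computation to $|\det M|$, where
\[
M=\begin{pmatrix}\gamma & (\xi/\sqrt2)\,I_k\\ \bzero & -(1/\sqrt2)\,I_k\end{pmatrix},\qquad \gamma_{im}=\tt_i\cdot\uu_m.
\]
By upper triangularity $|\det M|=2^{-k/2}|\det\gamma|$, and from \eqref{simpkvector} and \eqref{kvip}, $\det\gamma=(\tt_1\wedge\cdots\wedge\tt_k)\cdot(\uu_1\wedge\cdots\wedge\uu_k)=\vol_\M(z)\cdot\bom$. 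Combining this with the factor $\xi^{n-k-1}$ harvested from the $(\ee_i,\bzero)$-columns delivers the announced Jacobian, and the area formula then yields \eqref{COVlemma}.

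The main obstacle will be the bookkeeping: matching the six types of domain basis vectors against the codomain frame and confirming that the Jacobian matrix is block triangular in the indicated order. Once that is in place, the identification $\det\gamma=\vol_\M(z)\cdot\bom$ is immediate from the definition of the inner product on simple $k$-vectors, and no further analytical ingredient is required.
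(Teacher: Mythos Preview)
Your proposal is correct and takes a genuinely different route from the paper's own proof. The paper parametrizes $\M$ and $\W$ by charts $\phi$, $\bchi$ and computes the Jacobian of the composite map $F(x,w,\xi,r)=(\phi(x)+\xi\bchi_1(w),\bchi_2(w),r)$ via $\sqrt{\det(\nabla F^T\nabla F)}$; this forces a lengthy five-step computation involving Schur's formula, an explicit inverse $\bA^{-1}$, the matrix determinant lemma, and a final identification of $\det(\bP_\phi\bP_{\bchi_2}\bP_\phi^T)$ with $(\vol_\M\cdot\bchi_2)^2$. The chart Jacobians $\det(\nabla\phi^T\nabla\phi)$ and $\det(\nabla\bchi^T\nabla\bchi)$ are carried along throughout and only cancel at the end against \eqref{perpchange} and \eqref{surfchange}.

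Your approach sidesteps all of this by working intrinsically: choosing orthonormal frames on domain and codomain (the frame for $T_{(\ba,\bom)}\W$ being exactly the one used in Step~1 of Lemma~\ref{vfbound}) means the Jacobian is simply the absolute determinant of the matrix of $D\Xi$ in those frames, and your ordering makes that matrix block triangular. The successive Laplace expansions you describe are exactly right: the $\partial_\xi$-column has its sole nonzero entry in the $\ba$-row, each $(\ee_i,\bzero)$-column has its sole nonzero entry (after the previous reductions) in the $\ee_i$-row contributing $\xi$, and the residual $2k\times 2k$ block is as you wrote. The identification $\det\gamma=\vol_\M(z)\cdot\bom$ then drops out immediately from \eqref{kvip}. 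This is considerably more economical than the paper's chart-based computation; the price is that one must verify the frame for $T_\bom\hat\Lambda^k_u(\R^n)$ is orthonormal and that the six column-types land where you say, but this is straightforward bookkeeping (as you note) once \eqref{replacewedgeID} is in hand.
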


\begin{proof}
It suffices to prove the result for a set of the form $\A=\M_\A\times \mathcal{W}_\A\times \mathcal{I}_A$, where $\M_\A\subseteq \M$, $\mathcal{W}_A\subseteq \W,$ and $\mathcal{I}\subseteq\R^+$.  Moreover, by employing a partition of unity, we can reduce the result to the case where $\M_\A$ and $\mathcal{W}_\A$ are each covered by one chart. That is, there are $M_A\subseteq \R^k$ and $W_A\subseteq\R^m$, where $m\coloneqq\text{dim}\,\cW=n-1+k(n-k-1)$, and diffeomorphisms $\phi:M_A\rightarrow \M_\A\subseteq\Real^n$ and $\bchi:W_A\rightarrow \mathcal{W}_\A\subseteq\Real^{n+{n\choose k}}$. Since $ \cW_\A\subseteq \cW$, we can view $\bchi$ in terms of it component functions: $\bchi=(\bchi_1,\bchi_2),$ where $\bchi_1:\R^m\rightarrow \cU(\R^n)\subseteq\R^n$ and $\bchi_2:\R^m\rightarrow\hat{\Lambda}^k_u(\R^n)\subseteq\R^{n\choose k}$. 

Recall that, if $g$ is an integrable function defined on $\mathcal{W}_\A$, then 
\begin{equation}\label{perpchange}
\int_{\mathcal{W}_\A}g(\mathbf{a},\bom)d \H^{m}(\mathbf{a},\bom)=\int_{W_A}g(\bchi_1(w), \bchi_2(w))J_{\bchi}(w)dw,
\end{equation}
where $J_{\bchi}=\sqrt{\det(\nabla\bchi^T\nabla \bchi)}$ is the Jacobian of $\bchi.$ 
Also, if $h$ is an integrable function defined on $\M_\A$, then 
\begin{equation}\label{surfchange}
\int_{\M_\A}h(z)d\H^{k}(z)=\int_{M_A}h(\phi(x)) J_\phi(x)dx.
\end{equation}

Set $A\coloneqq M_A\times W_A\times \mathcal{I}_A\times \Real^+$, and define $F:A\rightarrow\Xi(\mathcal{A})$ by 
\[
F(x,w,\xi,r)\coloneqq(\phi(x)+\xi\bchi_1(w),\bchi_2(w),r),\qquad (x,w,\xi,r)\in A.
\]
To establish \eqref{COVlemma} by the area formula, we need to compute $J_{F}=\sqrt{|\det( \nabla F^T\nabla F) | }$.\\

\noindent\textit{Step 1}: Simplify $J_F$ using block matrices.  To do this, first notice that we have
\[
\nabla F=
\begin{blockarray}{ccccc}
k & m & 1 & 1  \\
\begin{block}{(cccc)c}
  \nabla \phi & \xi\nabla \bchi_1 & \bchi_1 & \bzero  & n \\
  \bzero & \nabla\bchi_2 & \bzero & \bzero  &{n\choose k} \\
  \bzero & \bzero & 0 & 1 & 1 \\
\end{block}
\end{blockarray}
\qquad\qquad 
\nabla F^T=
\begin{blockarray}{cccc}
n & {n\choose k}  & 1  \\
\begin{block}{(ccc)c}
  \nabla \phi^T &\bzero & \bzero  & k \\
   \xi\nabla \bchi_1^T & \nabla\bchi_2^T & \bzero  & m\\
  \bchi_1^T & \bzero & 0 & 1 \\
    \bzero & \bzero & 1  & 1 \\
\end{block}
\end{blockarray}
\]
and, because $\nabla \bchi_1^T \bchi_1=\bzero$ and $\bchi_1^T\bchi_1=1$,
\[
\nabla F^T\nabla F=
\begin{blockarray}{ccccc}
k & m & 1 & 1  \\
\begin{block}{(cccc)c}
  \nabla \phi ^T \nabla \phi  & \xi\nabla \phi^T\nabla \bchi_1 & \nabla \phi^T\bchi_1 & \bzero  & k \\
  \xi\nabla \bchi_1^T\nabla \phi & \xi^2\nabla \bchi_1^T\nabla\bchi_1+\nabla \bchi_2^T\nabla\bchi_2 & \bzero & \bzero  & m \\
  \bchi_1^T\nabla \phi & \bzero & 1 & 0 & 1 \\
  \bzero & \bzero & 0 & 1  & 1 \\
\end{block}
\end{blockarray}.
\]
To compute the absolute value of the determinant of this matrix, we rearrange the rows and columns into a block matrix and apply Schur's formula:
\renewcommand{\arraystretch}{1.15}
\begin{align}
|\det(\nabla F^T\nabla F)|&=\left|\det\,
\begin{blockarray}{cccc}
k & 1 & m  \\
\begin{block}{(cc|c)c}
  \nabla \phi ^T \nabla \phi & \nabla \phi^T\bchi_1 & \xi\nabla \phi^T\nabla \bchi_1   & k \\
  \bchi_1^T\nabla \phi  & 1& \bzero & 1 \\
  \cline{1-3}
 \xi\nabla \bchi_1^T\nabla \phi & \bzero& \xi^2\nabla \bchi_1^T\nabla\bchi_1+\nabla \bchi_2^T\nabla\bchi_2   & m \\ 
\end{block}
\end{blockarray}\right|\\
&=\Biggl|\det\,
\begin{blockarray}{cc}
&\\
\begin{block}{(c|c)}
\bA &\bB \\ 
\cline{1-2}
 \bB^T& \bC \\
  \end{block}
\end{blockarray}\Biggl|\\
\label{JFcomp}&= |\det(\bA)\det(\bC-\bB^T\bA^{-1}\bB)|.
\end{align}
\renewcommand{\arraystretch}{1}

\noindent\textit{Step 2}: Compute $\det \bA$.  To accomplish this, we introduce some notation.  Recall that $\nabla\phi(x)\in\Lin(\Real^k,\Real^n)$, but its range is $T_{\phi(x)}\cM$.  Let $\bP_{\phi(x)}$ denote the projection of $\Real^n$ onto $T_{\phi(x)}\cM$ so that $\bPhi(x)=\bP_{\phi(x)}\nabla\phi(x)$ is a linear isomorphism from $\Real^k$ to $T_{\phi(x)}\cM$.  Since we often suppress the argument $x$, we will write $\bPhi\in\Lin(\Real^k,T\cM)$, where it is implicitly understood at which point the tangent space is being considered.  Continuing this abuse of notation, we denote by $\bone_{T\cM}\in\Lin(T\cM,T\cM)$ the identity function on the tangent space $T_{\phi(x)}\cM$ for the appropriate $x$ without explicitly writing it.  Using this notation and properties of block matrix determinants we have
\begin{align}\det \bA=&\det
\begin{pmatrix}
\nabla \phi ^T \nabla \phi & \nabla \phi^T\bchi_1 \\
\bchi_1^T\nabla \phi  & 1 \\
  \end{pmatrix}
=\det\begin{pmatrix}
 \bPhi ^T \bPhi & \bPhi^T\bP_\phi\bchi_1 \\
   (\bPhi^T\bP_\phi\bchi_1)^T & 1 \\
\end{pmatrix}\nonumber\\
=& \det(\bPhi^T\bPhi-\bPhi^T\bP_\phi\bchi_1(\bP_\phi\bchi_1)^T\bPhi)\nonumber\\
=& \det(\bPhi^T[\bone_{T\M}-\bP_\phi\bchi_1(\bP_\phi\bchi_1)^T]\bPhi)\nonumber\\
=& \det(\bPhi^T\bPhi)\det(\bone_{T\M}-\bP_\phi\bchi_1(\bP_\phi\bchi_1)^T). \label{Ared}
\end{align}
To complete this calculation we need the determinant of $\bN$ defined by
\begin{equation}\label{defN}
\bN\coloneqq\bone_{T\M}-\bP_\phi\bchi_1(\bP_\phi\bchi_1)^T=\bone_{T\M}-\bP_\phi\bchi_1\otimes \bP_\phi\bchi_1.
\end{equation}
To find this determinant, we construct an orthonormal basis of $\R^n$ as follows. Start by considering an orthonormal basis $\{\tt_1,...,\tt_k\}$ of $T\M$ (at the appropriate point), and then extend it to an orthonormal basis of the whole space with vectors $\{\ee_{k+1},...,\ee_n\}$. Without loss of generality, we will assume $\bchi_1$ is in the span of $\tt_k$ and $\ee_{k+1}$. The matrix representation $[\bN]$ of $\bN$ relative to this basis is:
\beqn\label{repN}
[\bN]=
\begin{blockarray}{ccccc}
\bt_1 &\bt_2&...  & \bt_{k-1}&\bt_k  \\
\begin{block}{(ccccc)}
1&0&...&0&0\\
0&1&...&0&0\\
\vdots&\vdots&\ddots&\vdots&\vdots\\
0&0&...&1&0\\
0&0&...&0&1-(\tt_k\cdot\bchi_1)^2\\
\end{block}
\end{blockarray}.
\eeqn
Setting $J\coloneqq1-(\tt_k\cdot\bchi_1)^2=1-|\bP_\phi\bchi_1|^2=\det(\bN)$, we have  
\beqn\label{detAcomp}
\det \bA= \det(\bPhi^T\bPhi)J.  
\eeqn

\noindent\textit{Step 3}: Calculation of $\det(\bC-\bB^T\bA^{-1}\bB)$.  As part of this calculation, we need $\bA^{-1}$, which involves having a formula for $\bN^{-1}$.  One can see from \eqref{repN} that  
\[
\bN^{-1}=\tt_1\otimes \tt_1+...+\tt_{k-1}\otimes \tt_{k-1}+\frac{1}{J}\tt_{k}\otimes \tt_{k}.
\]
But $\frac{1}{J}=\frac{1}{J}(J+(\tt_k\cdot\bchi_1)^2)=1+\frac{(\tt_k\cdot\bchi_1)^2}{J}$, so $\bN^{-1}=\bone_{T\M}+\frac{1}{J}\bP_\phi\bchi_1\otimes \bP_\phi\bchi_1.$ 
Therefore, using the formula for the inverse of a block matrix in Theorem 2.1(ii) of \cite{LS02},
\begin{align*}
\bA^{-1}&= \begin{pmatrix}
(\bPhi^T\bN\bPhi)^{-1}& -(\bPhi^T\bN\bPhi)^{-1}\bPhi^T\bP_\phi\bchi_1 \\
-(\bP_\phi\bchi_1)^T\bPhi(\bPhi^T\bN\bPhi)^{-1} & 1+(\bP_\phi\bchi_1)^T\bPhi (\bPhi^T\bN\bPhi)^{-1}\bPhi^T\bP_\phi\bchi_1\\
  \end{pmatrix}\\
&= \begin{pmatrix}
\bPhi^{-1}\bN^{-1}\bPhi^{-T}& -\bPhi^{-1}\bN^{-1}\bP_\phi\bchi_1 \\
-(\bP_\phi\bchi_1)^T\bN^{-1}\bPhi^{-T} & 1+(\bP_\phi\bchi_1)^T\bN^{-1} \bP_\phi\bchi_1\\
\end{pmatrix}\\
&=\frac{1}{J} \begin{pmatrix}
J\bPhi^{-1}\bN^{-1}\bPhi^{-T}& -\bPhi^{-1}\bP_\phi\bchi_1 \\
-(\bP_\phi\bchi_1)^T\bPhi^{-T} & J+|\bP_\phi\bchi_1|^2\\
\end{pmatrix}\\
&=\frac{1}{J} \begin{pmatrix}
J\bPhi^{-1}\bN^{-1}\bPhi^{-T}& -\bPhi^{-1}\bP_\phi\bchi_1 \\
-(\bP_\phi\bchi_1)^T\bPhi^{-T} & 1\\
\end{pmatrix}.
\end{align*}

Using the same abuse of notation as before, let $\bP_{\bchi}\in \Lin( \R^{n+{n\choose k}}, T \W)$ be the projection onto the range of $\nabla \bchi$ and $\bI_{\bchi} =\bP_{\bchi}^T\in \Lin\big(T \W,\R^{n+{n\choose k}}\big)$ the injection from $T\W$ into $\R^{n+{n\choose k}}$.  Using properties of block matrix determinants, we find that 
\begin{align}
\det(\bC-\bB^T\bA^{-1}\bB)&=\det(\xi^2\nabla \bchi_1^T\nabla\bchi_1+\nabla \bchi_2^T\nabla\bchi_2 - \xi^2\nabla\bchi_1^T \bP_\phi^T\bN^{-1}\bP_\phi\nabla \bchi_1 )\\
&=\det\left[\nabla\bchi^T \begin{pmatrix}
\xi^2(\textbf{1}_n-\bP_\phi^T\bN^{-1}\bP_\phi)& \textbf{0} \\
\textbf{0} & \textbf{1}_{{n\choose k}}\\
\end{pmatrix}\nabla\bchi
\right]\\
&=\det(\nabla \bchi^T\nabla \bchi)\det\left[\bI_{\bchi}^T \begin{pmatrix}
\xi^2(\textbf{1}_n-\bP_\phi^T\bN^{-1}\bP_\phi)&\textbf{0} \\
\textbf{0} & \textbf{1}_{{n\choose k}}\\
\end{pmatrix}\bI_{\bchi}
\right].\label{det1}
\end{align}
To continue with the computation, we need to compute the determinant of 
\begin{align*}
\bM&\coloneqq\bI_{\bchi}^T
\begin{pmatrix}
\xi^2(\textbf{1}_n-\bP_\phi^T\bN^{-1}\bP_{\bphi})&\textbf{0} \\
\textbf{0} & \textbf{1}_{{n\choose k}}\\
\end{pmatrix}
\bI_{\bchi}\\
&=\bP_{\bchi} \begin{pmatrix}
\xi^2\textbf{1}_n&\textbf{0} \\
\textbf{0} & \textbf{1}_{{n\choose k}}\\
\end{pmatrix}\bP_{\bchi}^T
-\xi^2\bP_{\bchi}
\begin{pmatrix}
\textbf{1}_n \\
\textbf{0} \\
\end{pmatrix}
\bP_{\phi}^T\bN^{-1}\bP_{\phi}(\textbf{1}_n\,\, \bzero)\bP_{\bchi}^T.
\end{align*}

\noindent\textit{Step 4}: Computing $\det \bM$. For this calculation we introduce an orthonormal basis for $\R^n$, described as $\{\ee_1,...,\ee_n\}$, such that $\ee_1\coloneqq\bchi_1$ and $\ee_2\wedge...\wedge \ee_{k+1}=\bchi_2.$  We can decompose $\rng\nabla \bchi$ into three subspaces which, using the notation in \eqref{replacewedgeID}, are as follows:
\begin{enumerate}
\item $\text{Lsp}\{(\ee_{i},\textbf{0}) : k+2\leq i\leq n\}$, with $n-k-1$ elements,

\item $\text{Lsp}\{(\bzero,\be_i\wedge(\be_j\lrcorner \bchi_2)): 2\leq j\leq k+1, k+2\leq i\leq n\}$, with $k(n-k-1)$ elements,

\item $\text{Lsp}\{\frac{1}{\sqrt{2}}(\be_i,-\be_1\wedge(\be_i\lrcorner\bchi_2)): 2\leq i\leq k+1\}$,  with $k$ elements.

\end{enumerate}
Note that the sum of the dimensions of these subspaces is $m=\text{dim}\, \text{Rng}\, \nabla\bchi$, as expected. Let $\bP_1$, $\bP_2$, and $\bP_3$ be the projections of $\R^{n+{n\choose k}}$ onto each of these three subspaces of $\rng\nabla\bchi$, so that 
\beqn\label{nchidecomp}
\bP_{\bchi}=\bP_1+\bP_2+\bP_3.
\eeqn
The first term in $\bM$ consists of
\[
\bE\coloneqq\bP_{\bchi} 
\begin{pmatrix}
\xi^2\textbf{1}_n&\textbf{0} \\
\textbf{0} & \textbf{1}_{{n\choose k}}\\
  \end{pmatrix}\bP_{\bchi}^T.
  \]
The matrix $[\bE]$ of $\bE$ in terms of the basis $\{\be_1,\dots,\be_n\}$ is
\beqn\label{Ematrix}
 [\bE]=\begin{pmatrix}
\xi^2\textbf{1}_{n-k-1}&\textbf{0}&\textbf{0} \\
\textbf{0}&\textbf{1}_{k(n-k-1)}&\textbf{0}\\
\textbf{0} & \textbf{0}&\frac{\xi^2+1}2\textbf{1}_ k
\end{pmatrix},
\eeqn
which implies $\det\,\bE=\xi^{2(n-k-1)}(\xi^2+1)^k2^{-k}$. To compute the determinant of $\bM$ we now use the matrix determinant lemma in \cite{Harv97}, which states that
\beqn
\det (\bX+\bU\bY\bV^T)=(\det \bX)(\det \bY)\det (\bY^{-1}+\bV^T\bX^{-1}\bU),
\eeqn
with the choices $\bX\coloneqq\bE$, $\bY\coloneqq\bN^{-1}$, and $\bU=\bV\coloneqq\xi\bP_\phi 
\begin{pmatrix}
\bone_n & \textbf{0}
\end{pmatrix}
\bP^T_{\bchi}$
to obtain
\begin{align}
\det \bM=&(\det \bE)(\det \bN^{-1})\det\left(\bN-\xi^2\bP_\phi(\bone_n\,\, \textbf{0})\bP_{\bchi}^T \bE^{-1}\bP_{\bchi} \begin{pmatrix}
\bone_n \\
\textbf{0} \\
\end{pmatrix}
\bP_\phi^T\right)\\
=&\frac{\xi^{2(n-k-1)}(\xi^2+1)^k}{2^kJ}\det\left(\bN-\xi^2\bP_\phi(\bone_n\,\, \textbf{0})\bP_{\bchi}^T \bE^{-1}\bP_{\bchi} \begin{pmatrix}
\bone_n \\
\textbf{0} \\
\end{pmatrix}
\bP_\phi^T\right).
\end{align}

It follows from \eqref{nchidecomp} and the structure of the space associated with $\bP_2$ that
\beqn
\bP_{\bchi}\begin{pmatrix}
\bone_n \\
\textbf{0} \\
\end{pmatrix}
= (\bP_1+\bP_3)\begin{pmatrix}
\bone_n \\
\textbf{0}
\end{pmatrix}.
\eeqn
Using this and \eqref{Ematrix}, we can continue to calculate the determinant of $\bM$ as
\begin{align}
\text{det}\, \bM&=\frac{\xi^{2(n-k-1)}(\xi^2+1)^k}{2^kJ}\det\left(\bN-\bP_\phi(\bone_n\,\, \textbf{0})\bP_1 \begin{pmatrix}
\bone_n \\
\textbf{0} \\
\end{pmatrix}
\bP_\phi^T
-\frac{2\xi^2}{\xi^2+1}
\bP_\phi(\textbf{1}_n\,\, \bzero)\bP_3 \begin{pmatrix}
\bone_n \\
\textbf{0} \\
\end{pmatrix}
\bP_\phi^T
\right)\\
&=\frac{\xi^{2(n-k-1)}}{2^kJ}\det\left((\xi^2+1)\bN-(\xi^2+1)\bP_\phi(\bone_n\,\, \bzero)\bP_1\begin{pmatrix}
\bone_n \\
\textbf{0} \\
\end{pmatrix}
\bP_\phi^T
-2\xi^2
\bP_\phi(\bone_n\,\, \bzero)\bP_3 \begin{pmatrix}
\bone_n \\
\textbf{0} \\
\end{pmatrix}
\bP_\phi^T
\right)\\
&=\frac{\xi^{2(n-k-1)}}{2^kJ}\det\left(\xi^2\Big(\bN-\bP_\phi(\bone_n\,\, \textbf{0})\bP_1 \begin{pmatrix}
\bone_n \\
\bzero \\
\end{pmatrix}
\bP_\phi^T
-2
\bP_\phi(\bone_n\,\, \bzero)\bP_3 \begin{pmatrix}
\bone_n \\
\bzero \\
  \end{pmatrix}
\bP_\phi^T\Big) \right.\\
&\qquad\left.+\Big(\bN-\bP_\phi(\bone_n\,\, \bzero)\bP_1 \begin{pmatrix}
\bone_n \\
\bzero \\
\end{pmatrix}
\bP_\phi^T\Big)
\right).\label{det2}
\end{align}
Consider the two terms appearing in the determinant above:
\[
\bP^\perp_{\bchi_1\wedge\bchi_2}\coloneqq(\bone_n\,\, \bzero)\bP_1 \begin{pmatrix}
\bone_n \\
\bzero \\
\end{pmatrix}\in\Lin(\R^n,\R^n)\quad \mbox{and}\quad \bP_{\bchi_2}\coloneqq2(\bone_n\,\, \bzero)\bP_3 \begin{pmatrix}\bone_n \\
\bzero \\
\end{pmatrix}\in\Lin(\R^n,\R^n),
\]
which are projections onto $[\bchi_1\wedge\bchi_2]^{\perp}$ and $[\bchi_2]$, respectively. Since $\bN=\bP_\phi(
\bone_n-\bchi_1\otimes\bchi_1)\bP_\phi^T$ by \eqref{defN}, it follows that 
\beqn\label{det3}
\bN-\bP_\phi \bP^\perp_{\bchi_1\wedge\bchi_2} \bP_\phi^T-\bP_\phi \bP_{\chi_2}\bP_\phi^T=\bP_\phi(\bone_n-\bchi_1\otimes\bchi_1- \bP^\perp_{\bchi_1\wedge\bchi_2} - \bP_{\bchi_2})\bP_\phi^T=\bzero
\eeqn
and 
\beqn\label{det4}
\bN-\bP_\phi(\bone_n\,\, \bzero)\bP_1 \begin{pmatrix}
\bone_n \\
\bzero \\
\end{pmatrix}
\bP_\phi^T=\bP_\phi(\bone_n-\bchi_1\otimes\bchi_1-\bP^\perp_{\bchi_1\wedge\bchi_2})
\bP_\phi^T=\bP_\phi \bP_{\bchi_2}\bP_\phi^T.
\eeqn
Plugging \eqref{det3} and \eqref{det4} into \eqref{det2} yields
\beqn\label{detMfinal}
\det\bM=\frac{\xi^{2(n-k-1)}}{2^kJ}\det(\bP_\phi \bP_{\bchi_2}\bP_\phi^T).
\eeqn

To compute this final determinant, let $\tt_1,...,\tt_k$ is an orthonormal basis for $T\M$, so that $\vol_{\M}=\tt_1\wedge ....\wedge \tt_k$, then the matrix $[\bP_\phi \bP_{\bchi_2}\bP_\phi^T]$ of $\bP_\phi \bP_{\bchi_2}\bP_\phi^T$ relative to this basis has components
\begin{align*}
[\bP_\phi \bP_{\bchi_2}\bP_\phi^T]_{ij}&=\tt_i\cdot \bP_\phi \bP_{\bchi_2}\bP_\phi^T\tt_j\\
&=\tt_i\cdot\Big(\sum_{\ell=2}^{k+1}\ee_\ell\otimes \ee_\ell\Big)\tt_j\\
&=\sum_{\ell=2}^{k+1}(\tt_i\cdot\ee_\ell)(\ee_\ell\cdot \tt_j)\\
&=\sum_{\ell=1}^{k}B_{i\ell}B_{\ell j},
\end{align*}
where $B_{ij}\coloneqq\ee_{i+1}\cdot \tt_{j}$.  Let $\bB$ be the linear mapping whose matrix has components $B_{ij}$.  It follows from \eqref{kvip} that $\det(\bB)= \vol_{\M}\cdot \bchi_2$, so 
$$\det(\bP_\phi \bP_{\bchi_2}\bP_\phi^T)=\det(\bB\bB)=( \vol_{\M}\cdot \bchi_2)^2.$$  
Thus,
\beqn\label{detM}
\det\bM=\frac{\xi^{2(n-k-1)}}{2^kJ}( \vol_{\M}\cdot \bchi_2)^2.
\eeqn

\noindent\textit{Step 5}: Putting things together. Putting together \eqref{JFcomp}, \eqref{detAcomp}, \eqref{det1} and \eqref{detM} yields
\[
|\det(\nabla F^T\nabla F)|
=\det(\nabla \phi^T\nabla \phi)\det(\nabla \bchi^T\nabla \bchi)\frac{\xi^{2(n-k-1)}}{2^k}( \vol_{\M}\cdot \bchi_2)^2.
\]
Combining this with \eqref{perpchange} and \eqref{surfchange}, the area formula yields the desired result. 
\end{proof}

\begin{lemma}\label{subspint}
Let $\cX,\cY\subseteq\R^n$ be subspaces and suppose $g:\cX\rightarrow \R$ is an integrable function. If $\dim \cX=d$ and $\dim(\cX\cap \cY)=\ell$, then
\beqn\label{subspinteq}
\int_{\cU(\cX)}g(\bx)d\bx=\int_{\cU(\cX\cap \cY)}\int_{\cU(\cX\cap \cY^\perp)}\int_0^{\pi/2}g(\cos\theta \by+\sin\theta \by')\cos^{\ell-1}\theta\sin^{d-\ell-1}\theta d\theta d\by' d\by.
\eeqn
\end{lemma}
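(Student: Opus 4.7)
The plan is to parameterize $\cU(\cX)$ via spherical coordinates adapted to the orthogonal decomposition $\cX=(\cX\cap\cY)\oplus(\cX\cap\cY^\perp)$, and then invoke the area formula. This decomposition is available whenever $\cX$ is invariant under orthogonal projection onto $\cY$; in particular, it holds in the applications of this lemma made above, where $\cY\subseteq\cX$. Once the decomposition is in hand, the dimension count forces $\dim(\cX\cap\cY^\perp)=d-\ell$.

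First I would introduce the map
\[
\Phi:\cU(\cX\cap\cY)\times\cU(\cX\cap\cY^\perp)\times (0,\pi/2)\to \cU(\cX),\qquad \Phi(\by,\by',\theta)\coloneqq \cos\theta\,\by+\sin\theta\,\by',
\]
and verify that it is a bijection onto $\cU(\cX)$ up to a set of $\cH^{d-1}$-measure zero, the exceptional set being those unit vectors lying entirely in $\cX\cap\cY$ or $\cX\cap\cY^\perp$ (corresponding to $\theta=0$ or $\theta=\pi/2$). Uniqueness of the representation comes from reading $\theta$ off the norms of the components of $\bx\in\cU(\cX)$ in the two orthogonal summands.

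Next I would compute the Jacobian of $\Phi$. Fixing orthonormal bases $\{\by_1,\dots,\by_{\ell-1}\}$ of $T_\by\cU(\cX\cap\cY)$ and $\{\by'_1,\dots,\by'_{d-\ell-1}\}$ of $T_{\by'}\cU(\cX\cap\cY^\perp)$, the directional derivatives of $\Phi$ are
\[
\partial_{\by_i}\Phi=\cos\theta\,\by_i,\qquad \partial_{\by'_j}\Phi=\sin\theta\,\by'_j,\qquad \partial_\theta\Phi=-\sin\theta\,\by+\cos\theta\,\by'.
\]
These $d-1$ tangent vectors are pairwise orthogonal (using $\cX\cap\cY\perp\cX\cap\cY^\perp$, together with $\by_i\perp\by$ and $\by'_j\perp\by'$), and their norms are $\cos\theta$, $\sin\theta$, and $1$, respectively. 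Consequently the Gram determinant yields
\[
J_\Phi(\by,\by',\theta)=\cos^{\ell-1}\theta\,\sin^{d-\ell-1}\theta.
\]

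Applying the area formula with this change of variables then produces \eqref{subspinteq}. The main technical point will be the parameterization step: one must confirm that $\Phi$ covers $\cU(\cX)$ off a negligible set, which is exactly where the orthogonal decomposition of $\cX$ is essential. Once that is established, the Jacobian computation is a routine orthogonality check and the area formula closes the argument.
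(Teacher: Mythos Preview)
Your proposal is correct and essentially identical to the paper's proof: both parameterize $\cU(\cX)$ via the map $(\by,\by',\theta)\mapsto\cos\theta\,\by+\sin\theta\,\by'$, compute the Jacobian by observing that the resulting tangent vectors are mutually orthogonal with norms $\cos\theta$, $\sin\theta$, and $1$, and then invoke the area formula. Your remark that the orthogonal decomposition $\cX=(\cX\cap\cY)\oplus(\cX\cap\cY^\perp)$ tacitly requires a hypothesis such as $\cY\subseteq\cX$ is a valid point the paper leaves implicit, though it is indeed satisfied in every application of the lemma in the paper.
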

\begin{proof} 
Let $\bphi:\mathcal{O}_1\subseteq \R^\ell\rightarrow\cU(\cX\cap \cY)\subseteq\R^n$ and $\bpsi:\mathcal{O}_2\subseteq \R^{d-\ell}\rightarrow \cU(\cX\cap \cY^\perp)\subseteq\R^n$ be charts.  It follows that the function $\bF:\cO_1\times\cO_2\times(0,\pi/2)\rightarrow \cU(\cX)$ defined by
\beqn
\bF(u,v,\theta)=\bphi(u)\cos\theta+\bpsi(v)\sin\theta,\qquad (u,v,\theta)\in\cO_1\times\cO_2\times(0,\pi/2)
\eeqn
is a chart for $\cU(\cX)$.  The charts for $\cU(\cX)$ of this form cover almost all of $\cU(\cX)$.  Thus, it suffices to establish \eqref{subspinteq} when the left-hand side is the range of such a chart. A calculation yields
\[
\nabla \bF=\begin{blockarray}{cccc}
\ell-1 & d-\ell-1 & 1 &   \\
\begin{block}{(ccc)c}
 \nabla\bphi\cos\theta &\nabla \bpsi\sin\theta& -\bphi\sin\theta+\bpsi\cos\theta&n\\
\end{block}
\end{blockarray}\]
and, considering the nullspaces of $\nabla \bphi^T$ and $\nabla \bphi^T,$ we find
\[
\nabla \bF^{T}\nabla \bF=
\begin{blockarray}{cccc}
&\ell-1 & \d-\ell-1 & 1  \\
\begin{block}{(cccc)}
&\cos^2\theta\nabla \bphi^T\nabla \bphi & \bzero  & \bzero \\
  &\bzero &\sin^2\theta \nabla \bpsi^T\nabla \bpsi & \bzero  &\\
  &\bzero & \bzero & 1  \\
\end{block}
\end{blockarray}\,.  
\]
Thus, $\det(\nabla \bF^T\nabla \bF)=(\cos^2\theta)^{\ell-1}(\sin^2\theta)^{d-\ell-1}\det(\nabla \bphi^T\nabla \bphi)\det(\nabla \bpsi^T\nabla \bpsi)$.  The desired result now follows from the area formula. 
\end{proof}

\bibliography{nonmeasure}
\bibliographystyle{is-alpha}

\end{document}